	\newtheorem{thm}{Proposition}[section]
	\newtheorem{lemma}[thm]{Lemma}
	\newtheorem{corollary}{Corollary}
	\theoremstyle{definition}
	\newtheorem{defn}[thm]{Definition}
	\theoremstyle{remark}
	\newtheorem{example}[thm]{Example}
	\newtheorem{rem}[thm]{Remark}
\begin{document}   
 
	\begin{center}   
	{\bf  Counting Cliques in Finite Distant Graphs \\}   
	{\em Tim\ Silverman\\}   
	\end{center}   

\begin{abstract} 
\noindent  
We state and prove some counting formulas relating to cliques in the distant graphs of projective lines over finite rings. As a preliminary to this, we prove a decomposition theorem for the graphs in terms of the direct-product decomposition of their rings.

\noindent{\it Keywords}: Projective line over a ring; distant graph; q-binomial coefficient.

\end{abstract}

\section{Introduction}   
   
A projective line over a ring can be made into a graph, the {\bf distant graph}, where the vertices are the points of the projective line, and where an edge exists between two vertices when the points are ``distant" (for which, see below). In \cite{SmallComm} (for the commutative case) and \cite{SmallNonc} (for the noncommutative case) there are tabulated certain vital statistics relating to the counts of cliques in distant graphs over finite rings of small order (up to $32$). However, these papers omit to mention that there exist polynomial formulas for these vital statistics, in terms of the sizes of various objects associated with the rings. In this paper, we derive and discuss these formulas.

In Section 1, we recapitulate existing material on the distant graphs of projective lines over general rings and derive certain general results that we will use in subsequent sections. From the point of view of later sections, the most useful result in this section is our decomposition theorem, Proposition \ref{decomposition}, which asserts that the distant graph of a direct product of rings is the tensor product of the distant graphs of the direct factors.

In Section 2, we derive formulas for the commutative case, and in Section 3 for the general case. In Section 4 we exhibit a relationship between some coefficients of our counting polynomials and some other, better known, coefficients related to partitions. Some incidental extras appear in the appendices.

We introduce graphs first. A {\bf simple} graph is an undirected graph with no multiple edges or self loops (we do not assume here that the vertex set is finite). Each graph we shall be dealing with will be {\it either} a simple graph, {\it or} the graph with one vertex and a single loop, which we shall call $T$. (The presence of this odd exception will, we hope, appear less {\it ad hoc} later on.) Given two vertices $a_1$ and $a_2$, let us write $a_1\sim a_2$ if there is an edge between them (or $a_1 \underset{A}\sim a_2$ if we need to specify the graph $A$). A map between graphs is a map $f$ of the underlying sets such that $a_1\sim a_2\Rightarrow f\left(a_1\right)\sim f\left(a_2\right)$. Among the various ways to define a product of graphs, the one we are interested in here is the so-called {\bf tensor product}: the vertex set of the tensor product $A\times B$ is the cartesian product of the vertex sets of $A$ and $B$, and $\langle a_1,b_1\rangle\underset{A\times B}\sim\langle a_2,b_2\rangle\Leftrightarrow a_1\underset{A}\sim a_2$ {\it and} $b_1\underset{B}\sim b_2$.

\begin{rem}
Since there are no loops (except in $T$), no map of graphs can send two adjacent vertices of a graph to the same vertex of its image (unless the image is $T$). The loop also prevents there from being any maps out of $T$ (other than the identity).
\end{rem}

Now we bring in rings. Let $R$ be a ring with identity. Over this, we can construct the ring of $2\times2$ matrices, $\mathrm{M}_2\left(R\right)$, and sitting inside this is $\mathrm{GL}_2\left(R\right)$, the group of invertible $2\times2$ matrices. Let $\mathrm{GL}_2\left(R\right)$ act on $R^2$ from the right. Sitting inside $R^2$ are free modules of rank $1$, and, among these, the modules which also have a free rank $1$ complement, such as $R\left(1,0\right)$, form a single orbit under the action of $\mathrm{GL}_2\left(R\right)$. We say that this orbit is the set of points of the {\bf projective line} over $R$, $\mathbb{P}\left(R\right)$ (see Section 2 of \cite{ProjRep}). Each point in $\mathbb{P}\left(R\right)$ is of the form $R\left(a,b\right)$ for some $\left(a,b\right)$, {\it viz.} the image of $\left(1,0\right)$ under some element of $\mathrm{GL}_2\left(R\right)$ of the form $\left(\begin{array}{ccc}a&b\\c&d\end{array}\right)$. Such a pair $\left(a,b\right)$ is called {\bf admissible}. Every admissible pair is {\it unimodular}, that is $\exists x,\exists y:a x+b y=1$.

\begin{thm} \label{admiss-unit}

Two admissible pairs $\left(a,b\right)$ and $\left(a',b'\right)$ generate the same point iff there is a unit $u$ with $\left(a,b\right)=u\left(a',b'\right)$.

\end{thm}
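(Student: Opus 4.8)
The plan is to prove both implications directly from the definition of the point $R(a,b)$ as the left $R$-module of multiples $\{r(a,b) : r\in R\}$, using unimodularity as the one extra ingredient that makes the harder direction work.

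For the easy implication, suppose $(a,b)=u(a',b')$ for a unit $u$. I would observe that right multiplication $r\mapsto ru$ is a bijection of $R$ (its inverse is right multiplication by $u^{-1}$), so that $\{r(a,b)\}=\{ru(a',b')\}=\{s(a',b')\}$ as $s=ru$ ranges over all of $R$. Hence $R(a,b)=R(a',b')$, i.e. the two pairs generate the same point.

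For the converse, assume $R(a,b)=R(a',b')$. Since $(a,b)$ lies in its own module it lies in $R(a',b')$, so there is some $u$ with $(a,b)=u(a',b')$; symmetrically there is some $v$ with $(a',b')=v(a,b)$. Substituting one into the other gives $(a,b)=uv(a,b)$ and $(a',b')=vu(a',b')$. The task is now to upgrade $u$ to a genuine (two-sided) unit, and this is exactly where unimodularity enters. Reading $(a,b)=uv(a,b)$ componentwise gives $a=uv\,a$ and $b=uv\,b$; choosing $x,y$ with $ax+by=1$ (possible since $(a,b)$ is admissible, hence unimodular) I can compute $1=ax+by=uv(ax+by)=uv$. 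Running the identical computation with the unimodular pair $(a',b')$ and the relation $(a',b')=vu(a',b')$ yields $vu=1$. Hence $uv=vu=1$, so $u$ is a unit with $u^{-1}=v$ and $(a,b)=u(a',b')$, as required.

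The place to be careful—and the main obstacle in the noncommutative setting—is that the mutual-containment argument only produces one-sided relations at first: from $(a,b)=uv(a,b)$ alone one cannot cancel $(a,b)$ to conclude $uv=1$, and a merely one-sided inverse would not entitle us to call $u$ a unit. Unimodularity of each of the two pairs is precisely what allows the cancellation, delivering $uv=1$ and $vu=1$ as separate facts, so that no appeal to commutativity is needed anywhere in the argument.
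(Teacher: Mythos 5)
Your proof is correct. Note, though, that the paper does not actually prove this statement itself: its ``proof'' consists entirely of a citation to parts 1 and 2 of Proposition 2.1 of Blunck--Havlicek's \emph{Projective representations I}, so you have supplied a genuine argument where the paper defers to the literature. Your argument is the standard one and it is carried out carefully: the easy direction is immediate from $Ru=R$ for a unit $u$, and in the converse direction you correctly extract $(a,b)=u(a',b')$ and $(a',b')=v(a,b)$ from mutual containment and then use unimodularity of \emph{each} pair separately to cancel and obtain $uv=1$ and $vu=1$ as independent facts. You are also right to flag the two subtleties that make this nontrivial over a general ring: one cannot cancel $(a,b)$ from $(a,b)=uv(a,b)$ without unimodularity, and in a ring that is not Dedekind-finite a one-sided inverse would not suffice to call $u$ a unit, so both cancellations are genuinely needed. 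The only cosmetic caveat is that the bookkeeping ($a=ua'=u(va)=(uv)a$, then $uv=(uv)(ax+by)=ax+by=1$) depends on the convention that points are \emph{left} submodules $R(a,b)$ with $\mathrm{GL}_2(R)$ acting on the right, which is indeed the convention of the paper; with that convention every step checks out. In short: a correct, self-contained, and more informative proof than the paper's citation.
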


\begin{proof}

This combines parts 1 and 2 of Proposition 2.1 of \cite{ProjRep}.\end{proof}

Since $\left(\begin{array}{ccc}0&1\\1&0\end{array}\right)\in \mathrm{GL}_2\left(R\right)$, one of the points of $\mathbb{P}\left(R\right)$ is $R\left(0,1\right)$. Consider the images of the pair $\left\langle R\left(1,0\right),R\left(0,1\right)\right\rangle$ under an element of $\mathrm{GL}_2\left(R\right)$. The elements of such pairs are said to be {\bf distant} from each other, and distantness gives a graph structure on the points of $\mathbb{P}\left(R\right)$, on which $\mathrm{GL}_2\left(R\right)$ acts as graph automorphisms. If $p$ and $q$ are mutually distant, we write $\mathbf{p\triangle q}$. Note that admissible pairs $\left(a,b\right)$ and $\left(c,d\right)$ generate mutually distant points just when $\left(\begin{array}{ccc}a&b\\c&d\end{array}\right)\in \mathrm{GL}_2(R)$.

Now, a ring homomorphism $f:R\rightarrow S$ gives rise to a homomorphism of the corresponding matrix rings $\mathrm{M}_2\left(R\right)\rightarrow \mathrm{M}_2\left(S\right)$ via $\left(\begin{array}{ccc}a&b\\c&d\end{array}\right)\rightarrow\left(\begin{array}{ccc}a^f&b^f\\c^f&d^f\end{array}\right)$, and this gives rise to a group homomorphism $\mathrm{GL}_2\left(R\right)\rightarrow \mathrm{GL}_2\left(S\right)$. Likewise, there is an induced module homomorphism from $R^2$ to $S^2$ which sends a point of $\mathbb{P}\left(R\right)$ to some submodule of $S^2$. Let a point of $\mathbb{P}\left(R\right)$ be of the form $R\left(a,b\right)$ for some admissible $\left(a,b\right)$. Then the image of $R\left(a,b\right)$ will lie in the submodule $S\left(a^f,b^f\right)$, and $\left(a^f,b^f\right)$ is itself admissible, being the image of $\left(1,0\right)\in S^2$ under the action of $\left(\begin{array}{ccc}a^f&b^f\\c^f&d^f\end{array}\right)\in \mathrm{GL}_2\left(S\right)$, so we get a map from the points of $\mathbb{P}\left(R\right)$ to the points of $\mathbb{P}\left(S\right)$. Moreover (Proposition 3.1 of \cite{ProjRep}), this map preserves the graph structure. For if $R\left(a,b\right)\triangle R\left(c,d\right)$ then there is an element $\left(\begin{array}{ccc}a&b\\c&d\end{array}\right)\in \mathrm{GL}_2\left(R\right)$. Then there is an element $\left(\begin{array}{ccc}a^f&b^f\\c^f&d^f\end{array}\right)\in \mathrm{GL}_2\left(S\right)$, so $S\left(a^f,b^f\right)\triangle S\left(c^f,d^f\right)$. Finally, the action of $\mathrm{GL}_2\left(R\right)$ on $\mathbb{P}\left(R\right)$ is carried to an action of $\mathrm{GL}_2\left(S\right)$ on $\mathbb{P}\left(S\right)$. The image of $\mathrm{GL}_2(R)$ in the group of automorphisms of the graph is (by definition, but in agreement with the already-defined case where $R$ is a field) $\mathrm{PGL}_2(R)$.

Note that the trivial ring is sent to the one-vertex, one-loop graph $T$. The exceptional property of the trivial ring, $1=0$, is precisely the reason why $\left(1,0\right)=\left(0,1\right)$ in the free rank-$2$ module over the trivial ring, which is in turn the reason why the single vertex of its projective line has the exceptional property of being adjacent to itself. And that is why we want $T$ despite its oddity.

\begin{thm} \label{decomposition}

Let $R_1$ and $R_2$ be rings and $R_1\times R_2$ be their direct product. Then $\mathbb{P}\left(R_1\times R_2\right)=\mathbb{P}\left(R_1\right)\times\mathbb{P}\left(R_2\right)$. That is, (finite) direct products of rings give rise to tensor products of graphs.

\end{thm}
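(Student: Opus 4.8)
The plan is to reduce everything to the behaviour of the relevant algebraic data under the direct-product decomposition, and then to read off both the vertex bijection and the edge relation from that. The backbone is the canonical module isomorphism $\left(R_1\times R_2\right)^2\cong R_1^2\times R_2^2$, which sends $\left(\left(a_1,a_2\right),\left(b_1,b_2\right)\right)$ to $\left(\left(a_1,b_1\right),\left(a_2,b_2\right)\right)$; under it one has the compatible ring isomorphism $\mathrm{M}_2\left(R_1\times R_2\right)\cong\mathrm{M}_2\left(R_1\right)\times\mathrm{M}_2\left(R_2\right)$, and hence, passing to unit groups, $\mathrm{GL}_2\left(R_1\times R_2\right)\cong\mathrm{GL}_2\left(R_1\right)\times\mathrm{GL}_2\left(R_2\right)$, together with $\left(R_1\times R_2\right)^\times\cong R_1^\times\times R_2^\times$. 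I would establish these first, since every subsequent step is a corollary of one of them.

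For the vertices, the two coordinate projections $R_1\times R_2\to R_i$ are ring homomorphisms, so by the functoriality recalled in the excerpt each induces a graph map $\mathbb{P}\left(R_1\times R_2\right)\to\mathbb{P}\left(R_i\right)$; bundling them gives a map $\Phi\colon\mathbb{P}\left(R_1\times R_2\right)\to\mathbb{P}\left(R_1\right)\times\mathbb{P}\left(R_2\right)$ sending the point $\left(R_1\times R_2\right)\left(\left(a_1,a_2\right),\left(b_1,b_2\right)\right)$ to $\left\langle R_1\left(a_1,b_1\right),R_2\left(a_2,b_2\right)\right\rangle$. (Equivalently, under the module isomorphism above the cyclic submodule generated by $\left(\left(a_1,a_2\right),\left(b_1,b_2\right)\right)$ is exactly $R_1\left(a_1,b_1\right)\times R_2\left(a_2,b_2\right)$, which makes the map transparent.) That $\Phi$ is injective follows from Proposition \ref{admiss-unit}: if two admissible representatives agree in each coordinate up to a unit $u_i\in R_i^\times$, then they agree up to the single unit $\left(u_1,u_2\right)\in\left(R_1\times R_2\right)^\times$, hence generate the same point. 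Surjectivity follows because admissibility of $\left(\left(a_1,a_2\right),\left(b_1,b_2\right)\right)$ over $R_1\times R_2$ is, via the $\mathrm{GL}_2$ decomposition, equivalent to admissibility of each $\left(a_i,b_i\right)$ over $R_i$; so any pair of points lifts to a genuine point upstairs.

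The edge relation is then handled by the distantness criterion from the excerpt, namely that two admissible pairs are distant exactly when the $2\times2$ matrix they span lies in $\mathrm{GL}_2$. Writing $p$ and $q$ with representatives as above, the spanning matrix over $R_1\times R_2$ lies in $\mathrm{GL}_2\left(R_1\times R_2\right)$ iff, under the decomposition, each of its two coordinate matrices lies in $\mathrm{GL}_2\left(R_i\right)$, i.e. iff $\pi_1\left(p\right)\triangle\pi_1\left(q\right)$ and $\pi_2\left(p\right)\triangle\pi_2\left(q\right)$. This is precisely the adjacency rule of the tensor product, so $\Phi$ is an isomorphism of graphs. Note that functoriality already gives the forward implication $p\triangle q\Rightarrow\pi_i\left(p\right)\triangle\pi_i\left(q\right)$ for each $i$ separately; the genuinely new content is the converse, that distantness in both factors forces distantness upstairs, and this is exactly where the invertibility-respecting $\mathrm{GL}_2$ decomposition does the work.

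I expect the only real obstacle to be bookkeeping rather than conceptual: one must check that the distantness criterion is independent of the chosen admissible representatives (rescaling a row by a unit amounts to left multiplication by an element of $\mathrm{GL}_2$, so it preserves invertibility), and one should confirm that the argument survives the degenerate case where some $R_i$ is the trivial ring. In that case $\mathbb{P}\left(R_i\right)=T$, and since the loop on $T$ makes every vertex adjacent to itself, $T$ behaves as a two-sided identity for the tensor product; the distantness criterion and Proposition \ref{admiss-unit} were stated without excluding $1=0$, so the computation goes through unchanged and correctly yields $\mathbb{P}\left(R_1\right)\times T\cong\mathbb{P}\left(R_1\right)$. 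This is presumably the reason the odd graph $T$ was introduced in the first place.
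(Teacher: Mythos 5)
Your proof is correct and follows essentially the same route as the paper's: both rest on the decomposition $\mathrm{GL}_2\left(R_1\times R_2\right)\cong\mathrm{GL}_2\left(R_1\right)\times\mathrm{GL}_2\left(R_2\right)$ and the observation that admissibility and distantness of pairs decompose componentwise. Your version merely makes explicit some bookkeeping the paper leaves implicit (well-definedness and injectivity via Proposition \ref{admiss-unit}, and the trivial-ring case), which is a harmless refinement rather than a different argument.
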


\begin{proof}

Let $R\cong R_1\times R_2$. Then $\mathrm{M}_2\left(R\right)\cong \mathrm{M}_2\left(R_1\right)\times \mathrm{M}_2\left(R_2\right)$ and $\mathrm{GL}_2\left(R\right)\cong \mathrm{GL}_2\left(R_1\right)\times \mathrm{GL}_2\left(R_2\right)$. (This simple but crucial point is made in a more general context in section 2 of \cite{GenLin}). Now, the points of the $\mathbb{P}\left(R\right)$ are of the form $R\left(a,b\right)$ where $\left(a,b\right)$ is admissible. But let $a=\left\langle a_1,a_2\right\rangle$ and $b=\left\langle b_1,b_2\right\rangle$ for $a_1,b_2\in R_1$ and $a_2,b_2\in R_2$. Then $\left(a,b\right)$ is admissible precisely when $\left(a_1,b_1\right)$ and $\left(a_2,b_2\right)$ are admissible. For $\exists c,d\in R$ such that $\left(\begin{array}{ccc}a&b\\c&d\end{array}\right)\in \mathrm{GL}_2\left(R\right)$ precisely when $\exists c_1,d_1\in R_1,\exists c_2,d_2\in R_2$ such that $\left(\begin{array}{ccc}a_1&b_1\\c_1&d_1\end{array}\right)\in \mathrm{GL}_2\left(R_1\right)$ and $\left(\begin{array}{ccc}a_2&b_2\\c_2&d_2\end{array}\right)\in \mathrm{GL}_2\left(R_2\right)$. So $\mathbb{P}\left(R\right)\cong\mathbb{P}\left(R_1\right)\times\mathbb{P}\left(R_2\right)$ as a set. Moreover, the same argument proves that $\left(a,b\right)\triangle\left(c,d\right)$ precisely when $\left(a_1,b_1\right)\triangle\left(c_1,d_1\right)$ and $\left(a_2,b_2\right)\triangle\left(c_2,d_2\right)$, so that $\mathbb{P}\left(R\right)\cong\mathbb{P}\left(R_1\right)\times\mathbb{P}\left(R_2\right)$ as a graph. Moreover $\mathrm{GL}_2\left(R\right)\cong \mathrm{GL}_2\left(R_1\right)\times \mathrm{GL}_2\left(R_2\right)$, with each factor of the group acting on the corresponding factor of the product graph.

\end{proof}

Finite direct products of rings are the same as finite direct sums, so we need not distinguish them in future sections.

\begin{rem}

In the language of category theory, the discussion immediately prior to Proposition \ref{decomposition} implies that taking the projective line over a ring is a functor from the category of unital rings to the category whose objects are simple graphs and $T$, and whose morphisms are graph homomorphisms. (It is not usual to throw $T$ in with simple graphs, but it does no harm.) Moreover, Proposition \ref{decomposition} (relating the respective categorical products), together with the trivial ring being sent to $T$ (both being terminal), implies that this functor preserves all finite products. (Indeed on some subcategories, e.g. finite commutative rings, it can be shown to preserve all finite limits.)

\end{rem}

\section{Counting formulas: the commutative case}

Every finite commutative ring is the direct sum of local rings, so, by Proposition \ref{decomposition}, every distant graph over a finite commutative ring is the tensor product of the distant graphs over local rings; hence our chief task is to characterise the latter. A little algebra shows that (even in the most general case) the condition for $R\left(1,0\right)\triangle R\left(a,b\right)$, i.e. for $\left(\begin{array}{ccc}1&0\\a&b\end{array}\right)$ to be invertible, is just that $b$ should be a unit, and without loss of generality we can assume that $b=1$. Likewise $R\left(a,1\right)\triangle R\left(b,1\right)$ just when $a-b$ is a unit. But this requirement takes particularly simple form in a local ring, where it says that $a$ and $b$ lie in different cosets of the maximal ideal. We can now conveniently describe the distant graph of a local ring in terms of its complement. We use vertical bars to denote cardinalities.

\begin{thm} \label{commloc}

The distant graph of a finite local ring $R$ with Jacobson radical $J$ consists of the complement of the disjoint union of $\displaystyle\frac{\left\vert R\right\vert}{\left\vert J\right\vert}+1$ copies of the complete graph on $\left\vert J\right\vert$ vertices.

\end{thm}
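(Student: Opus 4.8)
The plan is to enumerate the vertices of $\mathbb{P}(R)$ explicitly, compute the distantness relation between every pair of them via determinants, and read off the promised block structure. First I would enumerate the points. In a commutative local ring a pair $(a,b)$ is admissible precisely when it is unimodular (given $ax+by=1$, the matrix $\left(\begin{smallmatrix} a & b \\ -y & x\end{smallmatrix}\right)$ has unit determinant, hence lies in $\mathrm{GL}_2(R)$), and this in turn holds precisely when at least one of $a,b$ is a unit, since a sum of two elements of $J$ can never equal $1$. Using Proposition \ref{admiss-unit} to normalise by a unit, every point with $b$ a unit is uniquely $R(a,1)$ with $a\in R$, giving $\left\vert R\right\vert$ ``finite'' points; every remaining point has $a$ a unit and $b\in J$, and is uniquely $R(1,c)$ with $c\in J$, giving $\left\vert J\right\vert$ ``points at infinity''. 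These families are disjoint (an equality $R(a,1)=R(1,c)$ would force $c$ to be a unit), so there are exactly $\left\vert R\right\vert+\left\vert J\right\vert=\left(\frac{\left\vert R\right\vert}{\left\vert J\right\vert}+1\right)\left\vert J\right\vert$ vertices, already matching the claimed count.

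Next I would determine all adjacencies from the determinant criterion $R(a,b)\triangle R(c,d)\Leftrightarrow ad-bc$ is a unit (valid in the commutative case, where invertibility of a $2\times2$ matrix is unit determinant). Among the finite points, $R(a,1)\triangle R(b,1)$ iff $a-b$ is a unit, i.e. iff $a$ and $b$ lie in different cosets of $J$, as already observed before the statement; so the finite points break into the $\frac{\left\vert R\right\vert}{\left\vert J\right\vert}$ cosets of $J$, with pairs inside a coset non-distant and pairs across cosets distant. Among the points at infinity, $R(1,c)\triangle R(1,c')$ has determinant $c'-c\in J$, never a unit, so these $\left\vert J\right\vert$ vertices are pairwise non-distant and form one block. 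Between the families, the determinant of $\left(\begin{smallmatrix} 1 & c \\ a & 1 \end{smallmatrix}\right)$ is $1-ca$, a unit because $ca\in J$ and $1-J$ consists of units in a local ring; hence every point at infinity is distant from every finite point.

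Assembling the three computations, the non-distant relation partitions the $\left\vert R\right\vert+\left\vert J\right\vert$ vertices into the $\frac{\left\vert R\right\vert}{\left\vert J\right\vert}$ coset-blocks together with the single infinity-block, that is, into $\frac{\left\vert R\right\vert}{\left\vert J\right\vert}+1$ blocks each of size $\left\vert J\right\vert$, and two vertices are distant exactly when they lie in different blocks. This is precisely the assertion that the distant graph is the complement of the disjoint union of $\frac{\left\vert R\right\vert}{\left\vert J\right\vert}+1$ copies of the complete graph on $\left\vert J\right\vert$ vertices. I expect the main obstacle to be the bookkeeping in the vertex enumeration: confirming that admissibility collapses cleanly to ``some coordinate is a unit'' and that the finite and infinite families are counted with no overlap. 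The decisive ring-theoretic input is the local property that $1-J$ consists of units, which is what makes every finite-to-infinity adjacency hold uniformly and forces each block to have exactly $\left\vert J\right\vert$ vertices.
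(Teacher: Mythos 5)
Your proposal is correct and follows essentially the same route as the paper: enumerate the points as $R(a,1)$ for $a\in R$ together with $R(1,c)$ for $c\in J$, then check the three kinds of adjacency by the determinant criterion (different cosets of $J$ for finite pairs, determinant in $J$ for infinite pairs, determinant in $1+J$ across the two families). The only difference is that you spell out the justification that admissibility reduces to one coordinate being a unit, which the paper leaves implicit.
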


\begin{proof}

Let $R$ be a local ring and let its maximal ideal be $J$. Every point of $\mathbb{P}\left(R\right)$ can be generated by an admissible pair of one of the forms $\left(a,1\right)$ or $\left(1,a\right)$. If a point is generated by an admissible pair both of whose components are units (i.e. it can be represented by both forms), let us represent it with the first form; thus we have one point generated by each pair of the form $\left(r,1\right)$ for $r\in R$, and one point generated by each pair of the form $\left(1,a\right)$, $a\in J$.

Now, two points of the first form, $\left(a,1\right)$ and $\left(b,1\right)$, are distant just if $a$ and $b$ lie in different cosets of $J$, or, to put it another way, each coset of $J$ gives a complete graph of order $\left\vert J\right\vert$ in the complementary graph. Points represented by the second form are all distant to those with the first form, because $\left(\begin{array}{cc}a&1\\1&b\end{array}\right)$ with $b\in J$ has as determinant the sum of $1$ and an element of $J$, and is therefore invertible. But they are non-distant to each other, because $\left(\begin{array}{cc}1&a\\1&b\end{array}\right)$ with $a,b\in J$ has as determinant an element of $J$, so is not invertible. Hence the set of such elements forms one more complete subgraph of order $\left\vert J\right\vert$ in the complementary graph.\end{proof}

Note that this graph structure depends only on the cardinalities $\left\vert R\right\vert$ and $\left\vert J\right\vert$ and not on any other details of the ring structure. Since we are dealing with finite local commutative rings, we can say a little more about the cardinalities. From Theorem 2 of \cite{FinRing}, there must be a prime number $p$ and two positive integers $n$ and $r$ such that $\left\vert R\right\vert=p^{n r}$ and $\left\vert J\right\vert=p^{\left(n-1\right)r}$. Conversely, there is at least one such ring for any choice of $p$, $n$, and $r$, namely the Galois ring (see the remarks near the beginning of section 3 of \cite{FinRing}). Hence all graphs of the appropriate form correspond to some finite commutative ring.

\begin{thm} \label{commcliq}

Let $R$ be a finite commutative ring with identity, and let $R_i$ be its local summands, of order $\left\vert R_i\right\vert$. Let the corresponding Jacobson radicals be $J$ of order $\left\vert J\right\vert$ and $J_i$, of order $\left\vert J_i\right\vert$, and let $q_i=\displaystyle\frac{\left\vert R_i\right\vert}{\left\vert J_i\right\vert}$. Then the following hold.

a) The number of $k$-cliques in $\mathbb{P}(R)$ is $\left\vert J\right\vert^k\prod_i\left(\begin{array}{ccc}q_i+1\\k\end{array}\right)$.

b) The number of $\left(k+1\right)$-cliques containing a given $k$-clique is $\left\vert J\right\vert\prod_i\left(q_i+1-k\right)$.

c) The maximal order of a clique is $\mathrm{min}\left(q_i\right)+1$.

\end{thm}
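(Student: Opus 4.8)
The plan is to reduce everything to the local case via the decomposition theorem and then to carry out the counting inside a tensor product of complete multipartite graphs. By Proposition~\ref{decomposition}, $\mathbb{P}(R)$ is the tensor product $\prod_i\mathbb{P}(R_i)$ over the local summands $R_i$; let $n$ be their number. By Proposition~\ref{commloc}, each factor $\mathbb{P}(R_i)$ is the complement of a disjoint union of $q_i+1$ copies of the complete graph on $|J_i|$ vertices, which is to say the complete multipartite graph with $q_i+1$ independent parts each of size $|J_i|$: two vertices of a factor are adjacent exactly when they lie in different parts. A vertex of $\mathbb{P}(R)$ is thus a tuple with one entry in each $\mathbb{P}(R_i)$, adjacent to another tuple iff the entries lie in different parts in every coordinate. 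Everything below is bookkeeping over this structure.

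For part (a) the first observation is that a clique of a tensor product projects to a clique in each factor: if two clique vertices agreed in some coordinate they would need a loop there, which the factors (being simple) do not have, so in each coordinate the clique vertices occupy distinct parts and are pairwise adjacent. The delicate point -- and the step I expect to be the main obstacle -- is that a $k$-clique of the product is \emph{not} merely a choice of a $k$-clique in each factor: one must also record how the $k$ vertices are matched up across the factors. The clean way around this is to count \emph{ordered} $k$-cliques, since an ordered $k$-clique of the product is exactly a tuple consisting of one ordered $k$-clique per factor; the ordered counts then multiply, and dividing by $k!$ at the end recovers the unordered count. In a single factor an ordered $k$-clique is obtained by choosing $k$ of the $q_i+1$ parts in order and one of $|J_i|$ vertices from each, giving $|J_i|^k(q_i+1)(q_i)\cdots(q_i+2-k)=|J_i|^k\,k!\binom{q_i+1}{k}$. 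Multiplying over the $n$ summands and dividing by $k!$ gives $(k!)^{\,n-1}|J|^k\prod_i\binom{q_i+1}{k}$. This constant in front of $\prod_i\binom{q_i+1}{k}$ is precisely what records the matchings between the factor-cliques, and it is easy to mis-handle; I would confirm it on a small case such as $R=\mathbb{F}_2\times\mathbb{F}_2$, where $\mathbb{P}(R)$ is the tensor square $K_3\times K_3$ of the triangle and the $k=2$ and $k=3$ clique counts ($18$ and $6$, read off by hand) fix the constant unambiguously.

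Part (b) is then immediate from the same coordinatewise picture. Fix a $k$-clique $Q$ and count the vertices $v$ for which $Q\cup\{v\}$ is again a clique, that is, the common neighbours of $Q$. Working in factor $i$, the $k$ vertices of $Q$ occupy $k$ distinct parts, so the $i$-th coordinate of $v$ must lie in one of the remaining $q_i+1-k$ parts and may be any of the $|J_i|$ vertices there, giving $(q_i+1-k)|J_i|$ choices. Since the coordinates are chosen independently, the total is $\prod_i(q_i+1-k)|J_i|=|J|\prod_i(q_i+1-k)$, as claimed; note this count does not depend on the matching and so carries no extra factorial, which is a useful consistency check against part (a).

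Part (c) follows from the projection principle of part (a): the clique number of the product is the minimum over factors of the factor clique numbers, since a clique projects to a clique in every coordinate and the projections may be prescribed independently. A complete multipartite graph with $q_i+1$ parts has clique number $q_i+1$ (at most one vertex per part), so the clique number of $\mathbb{P}(R)$ is $\min_i(q_i+1)=\min_i(q_i)+1$, realised by any clique meeting $\min_i(q_i)+1$ distinct parts in each coordinate. The only substantial work is in part (a); parts (b) and (c) are corollaries of the tensor-product structure once it is in place.
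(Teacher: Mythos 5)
Your counting is correct, and it is in fact more careful than the paper's: the formula you arrive at for part (a), $(k!)^{n-1}\,\left\vert J\right\vert^k\prod_i\binom{q_i+1}{k}$ with $n$ the number of local summands, is \emph{not} the formula stated in the Proposition, and the discrepancy is real. The ``delicate point'' you flag --- that a $k$-clique of a tensor product is a choice of $k$-clique in each factor \emph{together with} a matching between them --- is exactly the point the paper's own proof glosses over: it asserts outright that the number of $k$-cliques in the product is the product of the numbers in the factors, which is the ordered-clique statement mistakenly applied to unordered cliques. Your test case settles the matter: for $R=\mathbb{F}_2\times\mathbb{F}_2$ the distant graph is $K_3\times K_3$, which has $18$ edges and $6$ triangles, whereas the stated formula gives $9$ and $1$; your corrected formula gives $18$ and $6$. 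The inconsistency is even visible internally: iterating part (b) from the empty clique and dividing by $k!$ yields $\frac{\left\vert J\right\vert^k}{k!}\prod_i\frac{(q_i+1)!}{(q_i+1-k)!}=(k!)^{n-1}\left\vert J\right\vert^k\prod_i\binom{q_i+1}{k}$, so parts (a) and (b) as printed contradict each other whenever $n\ge2$ and $k\ge2$. (The error does not propagate into the rest of the paper, which only uses part (b) downstream, e.g.\ in the $\cap n\mathrm{N}$ computation.)

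For parts (b) and (c) your arguments are correct and essentially the paper's. Extension counts, unlike raw clique counts, genuinely do multiply across factors because the matching is already fixed by the given clique; your observation that (b) ``carries no extra factorial'' is the right consistency check. For (c), the identity ``clique number of a tensor product equals the minimum of the factors' clique numbers'' needs both inequalities: you supply the upper bound by projection and the lower bound by prescribing the factor cliques and a matching, whereas the paper explicitly argues only the upper bound (the lower bound also follows from its part (b), since the extension count $\left\vert J\right\vert\prod_i(q_i+1-k)$ remains positive until $k=\min_i(q_i)+1$). The one thing to fix in your write-up is therefore not a gap in your reasoning but the statement itself: record the corrected part (a) rather than the printed one.
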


\begin{proof}
Consider one of the local rings $R_i$. From Proposition \ref{commloc}, the complement of its distant graph consists of $q_i+1$ copies of the complete graph on $\left\vert J_i\right\vert$ vertices. A clique in the distant graph then contains at most one point from any of the complete graphs. Thus there are ${\left\vert J_i\right\vert}^k\left(\begin{array}{ccc}q_i+1\\k\end{array}\right)$ $k$-cliques altogether; and the number of $\left(k+1\right)$-cliques containing a given $k$-clique is $\left\vert J_i\right\vert\left(q_i+1-k\right)$.

$k$ points of a product graph are mutually distant precisely when their components in each factor are mutually distant, so the number of $k$-cliques in the projective line over a finite ring is just the product of the number in each local summand ring. Since $J$ is the cartesian product of the $J_i$, $\left\vert J\right\vert=\displaystyle\prod_i\left\vert J_i\right\vert$, the cardinality $\left\vert J\right\vert$ can be pulled out of the product to give the formulas stated above.

When $k=q_i+1$, obviously $\left\vert J_i\right\vert\left(q_i+1-k\right)=0$, so $q_i+1$ is the maximal order of a clique in $R_i$. Since a clique in a tensor product graph must project to a clique in each of its factors, the maximal order of a clique in the product graph cannot exceed the maximal order in any of the factor graphs, hence is the minimum of the maxima.\end{proof}

We now take a definition from Section 2 of \cite{SmallComm}.

\begin{defn}

The {\bf distant-set} of a point is the set of points distant to it, and its {\bf neighbourhood} is the complement of its distant-set.

\end{defn}

We find listed in \cite{SmallComm}, among the properties of projective spaces of small rings, the cardinalities of the intersections of the neighbourhoods of $n$ mutually distant points, a number there denoted by $\cap n\mathrm{N}$. There is a general formula for this quantity (with the same notation as in Proposition \ref{commcliq}).

\begin{thm}

$\cap n\mathrm{N}=\left\vert J\right\vert\displaystyle\sum_{k=0}^n\left(-1\right)^k\left(\begin{array}{ccc}n\\k\end{array}\right)\displaystyle\prod_i\left(q_i+1-k\right)$

\end{thm}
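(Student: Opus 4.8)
The plan is to compute $\cap n\mathrm{N}$ by inclusion–exclusion over the neighbourhoods. First I would fix $n$ mutually distant points forming an $n$-clique, and recall that $\cap n\mathrm{N}$ counts the points lying in the neighbourhood of all of them, i.e. the points \emph{non-distant} to every one of the $n$ chosen points. It is cleaner to count the complementary quantity: the number of points distant from \emph{at least one} of the $n$ points, and subtract from the total number of points in $\mathbb{P}(R)$. Applying inclusion–exclusion to the union of the $n$ distant-sets, the number of points distant from at least one chosen point is $\sum_{k=1}^{n}(-1)^{k-1}\binom{n}{k}D_k$, where $D_k$ is the number of points distant from a fixed set of $k$ of our mutually distant points. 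Hence $\cap n\mathrm{N}=|\mathbb{P}(R)|-\sum_{k=1}^{n}(-1)^{k-1}\binom{n}{k}D_k=\sum_{k=0}^{n}(-1)^k\binom{n}{k}D_k$, using $D_0=|\mathbb{P}(R)|$.

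The crux is then to identify $D_k$, the number of points distant from all of a given set of $k$ mutually distant points. I would argue that $D_k$ equals the number of ways to extend a fixed $k$-clique to a $(k+1)$-clique by adjoining one more vertex, since a point is distant from all $k$ points exactly when adding it produces a $(k+1)$-clique. That count is precisely part (b) of Proposition \ref{commcliq}: the number of $(k+1)$-cliques containing a given $k$-clique is $|J|\prod_i(q_i+1-k)$. Thus $D_k=|J|\prod_i(q_i+1-k)$. Here I should check the boundary case $k=0$: the formula gives $|J|\prod_i(q_i+1)$, which must equal $|\mathbb{P}(R)|$; this is consistent, since by Proposition \ref{commloc} each local factor contributes $(q_i+1)|J_i|$ points and $|J|=\prod_i|J_i|$, so $|\mathbb{P}(R)|=\prod_i(q_i+1)|J_i|=|J|\prod_i(q_i+1)$.

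Substituting $D_k=|J|\prod_i(q_i+1-k)$ into the inclusion–exclusion sum and pulling the common factor $|J|$ outside yields
\[
\cap n\mathrm{N}=|J|\sum_{k=0}^{n}(-1)^k\binom{n}{k}\prod_i(q_i+1-k),
\]
which is exactly the claimed formula. The main obstacle I anticipate is the clean justification that $D_k$ depends only on $k$ and not on \emph{which} $k$-clique is chosen — that is, that the number of common distant points to any $k$ mutually distant points is the same regardless of the particular clique. This homogeneity is what makes the inclusion–exclusion terms collapse into binomial-coefficient multiples of a single $D_k$. I would secure it by invoking part (b) of Proposition \ref{commcliq}, whose statement already asserts a count independent of the chosen $k$-clique (ultimately because $\mathrm{GL}_2(R)$ acts transitively on $k$-cliques, or directly from the product structure of the distant graph established in Propositions \ref{decomposition} and \ref{commloc}). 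Once that uniformity is in hand, the remainder is the routine inclusion–exclusion bookkeeping sketched above.
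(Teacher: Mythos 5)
Your proposal is correct and follows essentially the same route as the paper: inclusion--exclusion on the union of the $n$ distant-sets, identifying each $k$-fold intersection with the $(k+1)$-clique-extension count $\left\vert J\right\vert\prod_i\left(q_i+1-k\right)$ from Proposition \ref{commcliq}(b), and absorbing the total point count as the $k=0$ term. Your extra remark on why the intersection cardinality depends only on $k$ is a point the paper leaves implicit, but it does not change the argument.
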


\begin{proof} The intersection of a set of complements is the complement of their union, and the cardinality of the union can be calculated by inclusion-exclusion. A set of $k$ mutually distant points is a $k$-clique and so the number of points in the intersection of all their distant-sets is just the number of $k+1$-cliques containing that $k$-clique, {\it viz.} $\left\vert J\right\vert\prod_i{\left(q_i+1-k\right)}$. Given $n$ mutually distant points, there are $\left(\begin{array}{ccc}n\\k\end{array}\right)$ $k$-fold intersections among their distant sets, and each $k$-fold intersection has $\left\vert J\right\vert\prod_i{\left(q_i+1-k\right)}$ points, so by inclusion-exclusion, their union contains

\[-\left\vert J\right\vert\displaystyle\sum_{k=1}^n{\left(-1\right)^k\left(\begin{array}{ccc}n\\k\end{array}\right)\displaystyle\prod_i\left(q_i+1-k\right)}\]

\noindent points. The total number of points in the projective line, viz. $\left\vert J\right\vert\prod_i{\left(q_i+1\right)}$, is just the value of the expression inside the sum for $k=0$, so, taking the complement of the union, we have:

\[\cap n\mathrm{N}=\left\vert J\right\vert\displaystyle\sum_{k=0}^n\left(-1\right)^k\left(\begin{array}{ccc}n\\k\end{array}\right)\displaystyle\prod_i\left(q_i+1-k\right)\]\end{proof}

There is not much to be said about this sum, except for the quirky combinatorial fact, which we prove in Appendix \ref{lacun}, that $p\vert\cap n\mathrm{N}$ for every prime $p\le n$. Hence $\cap5\mathrm{N}$ is always a multiple of $30$, and so forth.

\section{Counting formulas for general finite rings}

We now turn to the non-commutative case. Some results for small non-commutative rings are briefly tabulated in \cite{SmallNonc}.

Let $R$ be a finite ring and let $J$ be its Jacobson radical. We will make use of a definition and some theorems from \cite{RadPara}.

\begin{defn}

Two points $p$ and $q$ of a projective line are {\bf radically parallel}, $p\|q$, just if they have the same distant sets.

\end{defn}

This is not the same as the definition in \cite{RadPara}, but by their Corollary 2.3 it is equivalent.

\begin{thm}

The points of the form $R\left(1,a\right)$ for $a\in J$ form an equivalence class under the relation of radical parallelism.

\end{thm}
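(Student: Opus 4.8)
The plan is to prove that the stated equivalence class is exactly $\{R(1,a):a\in J\}$ by establishing two inclusions: that every pair of these points is radically parallel, so the set lies in a single class, and that no further point is radically parallel to them, so the set is the whole class. Throughout I would use the characterisation already recorded above that $R(p,q)\triangle R(x,y)$ iff $\begin{pmatrix} p & q \\ x & y \end{pmatrix}\in\mathrm{GL}_2(R)$, together with the fact that $R(1,0)\triangle R(x,y)$ iff $y$ is a unit. The latter identifies the distant-set of $R(1,0)$ as precisely $\{R(x,1):x\in R\}$, since any point whose second coordinate is a unit can be rescaled on the left into that form.

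For the first direction, fix $a\in J$ and an arbitrary point $R(x,y)$. Left-multiplying $\begin{pmatrix} 1 & a \\ x & y \end{pmatrix}$ by the invertible elementary matrix $\begin{pmatrix} 1 & 0 \\ -x & 1 \end{pmatrix}$ clears the lower-left entry and yields $\begin{pmatrix} 1 & a \\ 0 & y-xa \end{pmatrix}$, which is invertible iff $y-xa$ is a unit. Since $J$ is a two-sided ideal we have $xa\in J$, and because units are detected modulo the radical, $y-xa$ is a unit iff $y$ is a unit. Hence $R(1,a)\triangle R(x,y)$ iff $y$ is a unit iff $R(1,0)\triangle R(x,y)$, so $R(1,a)$ and $R(1,0)$ have the same distant-set. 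As radical parallelism is an equivalence relation, all the points $R(1,a)$ with $a\in J$ then lie in the single class of $R(1,0)$.

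For the converse, suppose $p\|R(1,0)$, so that $p$ shares the distant-set $\{R(x,1):x\in R\}$. Taking $x=0$ shows $p\triangle R(0,1)$, i.e. $p=R(c,d)$ with $\begin{pmatrix} c & d \\ 0 & 1 \end{pmatrix}$ invertible; a short computation then forces $c$ to be a unit, so after rescaling on the left I may write $p=R(1,d')$. Now $p$ must be distant from every $R(x,1)$, and the same reduction as before sends $\begin{pmatrix} 1 & d' \\ x & 1 \end{pmatrix}$ to $\begin{pmatrix} 1 & d' \\ 0 & 1-xd' \end{pmatrix}$, so $1-xd'$ must be a unit for every $x\in R$. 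By the standard characterisation of the Jacobson radical this says exactly that $d'\in J$, and hence $p=R(1,d')$ with $d'\in J$, completing the identification of the class.

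The routine points here are the elementary-matrix reductions and the rescaling of representatives by units; the step carrying the real content is the last one, where one must invoke the correct non-commutative characterisation of $J$, namely that $d'\in J$ iff $1-xd'$ is a unit for all $x$ (in a finite ring it suffices to check left-invertibility). The one place where non-commutativity must be watched carefully is keeping left and right multiplication straight — points are left multiples $R(p,q)$, the elementary operations act on the left, and the rescaling of the second coordinate to $1$ also acts on the left — but beyond this bookkeeping no idea is needed beyond what already appears in Proposition \ref{commloc}.
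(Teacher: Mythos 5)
Your argument is correct, but it is not the route the paper takes: the paper disposes of this proposition in one line by citing Theorem 2.1 of the Blunck--Havlicek paper on radical parallelism, whereas you give a self-contained verification from the definition of distantness. Both halves of your proof check out. The row reduction showing that $\left(\begin{array}{cc}1&a\\x&y\end{array}\right)$ is invertible iff $y-xa$ is a unit, combined with the fact that units are detected modulo $J$, correctly identifies the distant-set of each $R\left(1,a\right)$, $a\in J$, with that of $R\left(1,0\right)$; and your converse correctly reduces ``$p\|R\left(1,0\right)$'' to ``$1-xd'$ is a unit for all $x$'' and then invokes the standard characterisation of the Jacobson radical as $\left\{d:1-xd\text{ is left invertible for all }x\right\}$ (note that the implication you actually need, from ``unit for all $x$'' to $d'\in J$, holds in any ring; only the promotion of one-sided inverses to units in the triangular-matrix steps uses Dedekind-finiteness, which you flag and which is automatic here since the rings are finite). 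What the citation buys the paper is brevity and a pointer to the general theory, where the equivalence classes of radical parallelism are described for arbitrary rings; what your proof buys is transparency --- the reader sees exactly which property of $J$ (invisibility to units, plus the $1-xd$ characterisation) makes the class of $R\left(1,0\right)$ be precisely $\left\{R\left(1,a\right):a\in J\right\}$, using nothing beyond the unimodular-pair formalism already set up in Section 1.
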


\begin{proof}

This follows immediately from \cite{RadPara} Theorem 2.1.\end{proof}

\begin{thm}

Two points of the form $R\left(1,a\right)$ for $a\in J$ are not mutually distant.

\end{thm}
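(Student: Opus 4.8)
The cleanest route exploits the Proposition immediately preceding this one. Since the points $R(1,a)$ with $a\in J$ all lie in a single radical-parallelism class, any two of them, say $R(1,a)$ and $R(1,b)$, have \emph{identical} distant sets. The plan is then purely formal: if $R(1,a)\triangle R(1,b)$, then each of the two points would belong to the distant set of the other; but those distant sets coincide, so each point would belong to its own distant set, i.e. be distant from itself. For a nontrivial ring $R$ the distant graph is a genuine simple graph with no loops (the only self-adjacent vertex anywhere in our setup is that of $T$, which arises solely from the trivial ring), so this is impossible, and the two points cannot be distant.

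If one prefers an argument that does not lean on \cite{RadPara}, I would instead verify non-invertibility directly. By the criterion recalled earlier, $R(1,a)\triangle R(1,b)$ holds precisely when $\left(\begin{array}{cc}1&a\\1&b\end{array}\right)\in\mathrm{GL}_2(R)$, so it suffices to show this matrix is not invertible. Reducing modulo $J$ sends $a$ and $b$ to $0$, so the image in $\mathrm{M}_2(R/J)$ is $\left(\begin{array}{cc}1&0\\1&0\end{array}\right)$, which is singular (it has a zero column) over the nontrivial ring $R/J$. Since $\mathrm{M}_2(J)$ is the Jacobson radical of $\mathrm{M}_2(R)$ and units lift modulo the radical, a matrix over $R$ is invertible if and only if its reduction over $R/J$ is; as the reduction is not invertible, neither is the original, and the points are non-distant.

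I do not expect a real obstacle: both arguments collapse to the single principle that nothing survives the passage to $R/J$ to witness distantness. The one subtlety worth flagging is the degenerate case $R=0$, where $\mathbb{P}(R)=T$ genuinely does have a self-distant vertex and the statement fails; the Proposition must therefore be read under the standing hypothesis $R\neq0$, which I would state explicitly. With that caveat in place, the ``no self-loops'' step in the first argument — equivalently, the nontriviality of $R/J$ in the second — is exactly what makes the conclusion go through.
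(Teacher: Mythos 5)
Your proposal is correct, and both of your arguments differ from the one the paper actually gives. The paper's proof is a single direct computation: the right action of $\left(\begin{array}{cc}1&a\\1&b\end{array}\right)$ sends $(r,s)$ to $(r+s,\,ra+sb)$, whose second coordinate always lies in $J$; since $J\neq R$ for a nontrivial unital ring, the map is not surjective and the matrix is not invertible. Your second argument (reduction mod $J$) is really the same observation in different clothing, but note that you only need the trivial direction --- a ring homomorphism carries units to units, so an invertible matrix would have an invertible image in $\mathrm{M}_2(R/J)$ --- and the appeals to lifting of units and to $\mathrm{M}_2(J)$ being the Jacobson radical of $\mathrm{M}_2(R)$ are unnecessary (though harmless) machinery. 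Your first argument is the genuinely different and arguably slicker one: it derives the claim purely formally from the preceding proposition on radical parallelism together with looplessness of the distant graph, at the cost of depending on that imported result (\cite{RadPara}, Theorem 2.1) and on the unproved-but-easy fact that a nontrivial ring gives a loop-free graph (which follows since $(1,-1)$ annihilates $\left(\begin{array}{cc}c&d\\c&d\end{array}\right)$ from the left). Your caveat about $R=0$ is well taken and applies equally to the paper's own proof, which silently uses $J\neq R$.
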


\begin{proof}

We can not possibly have $\left(\begin{array}{ccc}1&a\\1&b\end{array}\right)\in\mathrm{GL}_2(R)$ with $a,b\in J$, since it sends $\left(r,s\right)$ to $\left(r+s,r a+s b\right)$, and the second element of the latter pair will always lie in the Jacobson radical.\end{proof}

Since the distant graph is vertex transitive, every vertex lies in such a set of order $\left\vert J\right\vert$ of mutually non-distant points sharing their distant-sets. Since the points of such a set are mutually non-distant, taking the quotient of the graph by the identification of radically parallel points gives a perfectly good surjective morphism of graphs. We might hope that this quotient is the distant graph of the projective line of the quotient of $R$ by $J$, and Theorem 2.2 of \cite{RadPara}, with the immediately preceding discussion, shows just this.

\begin{thm}

$\mathbb{P}\left(R/J\right)\cong\mathbb{P}\left(R\right)/\|$.

\end{thm}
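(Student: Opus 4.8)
The plan is to work with the graph homomorphism $\phi:\mathbb{P}(R)\to\mathbb{P}(R/J)$ induced, by the functoriality established above, from the quotient map $\pi:R\to R/J$, and to show that $\phi$ both preserves \emph{and} reflects distantness; this single fact will deliver everything. Concretely, for points $p=R(a,b)$ and $q=R(c,d)$, distantness $p\triangle q$ is equivalent to $\left(\begin{smallmatrix}a&b\\c&d\end{smallmatrix}\right)\in\mathrm{GL}_2(R)$, while $\phi(p)\triangle\phi(q)$ is equivalent to the reduction of this matrix lying in $\mathrm{GL}_2(R/J)$. So the whole theorem rests on showing that a matrix over $R$ is invertible if and only if its image modulo $J$ is invertible.

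That equivalence is the hard part, and it is where the Jacobson radical does its work. First I would note that $\mathrm{M}_2(J)$ is the Jacobson radical of $\mathrm{M}_2(R)$ and that $\mathrm{M}_2(R)/\mathrm{M}_2(J)\cong\mathrm{M}_2(R/J)$, so reducing a $2\times2$ matrix modulo $J$ is the same as reducing $\mathrm{M}_2(R)$ modulo its radical. Then I would invoke the standard fact that an element of a unital ring is a unit precisely when its image modulo the Jacobson radical is a unit: the forward direction is trivial, and for the converse, if $MN$ and $N'M$ differ from the identity by radical elements then they are themselves units, giving $M$ both a right and a left inverse. This yields the desired criterion $(\ast)$: a matrix over $R$ is invertible iff its reduction over $R/J$ is.

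With $(\ast)$ in hand the rest is bookkeeping. Surjectivity of $\phi$ follows by lifting: given an admissible pair over $R/J$, lift its entries arbitrarily together with a completing invertible matrix; the lifted matrix reduces to an invertible one, so by $(\ast)$ it is invertible and the lifted pair is admissible. Next I would identify the fibres of $\phi$ with the parallelism classes. Since $\phi$ preserves and reflects distantness, the distant-set of $p$ is $\{r:\phi(p)\triangle\phi(r)\}$; hence $\phi(p)=\phi(q)$ immediately gives that $p$ and $q$ share a distant-set, i.e. $p\|q$. Conversely, if $p\|q$, then using surjectivity one checks that $\phi(p)$ and $\phi(q)$ have the same distant-set in $\mathbb{P}(R/J)$, i.e. $\phi(p)\|\phi(q)$; but $R/J$ is semisimple, so its radical is trivial and the parallelism class of each point — which by the description above is $\{(R/J)(1,\bar a):\bar a\in J(R/J)\}$ — is a single point, forcing $\phi(p)=\phi(q)$.

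Finally I would check that $\phi$ descends to the claimed isomorphism. Since radically parallel points share distant-sets, the relation $p\triangle q$ is constant on parallelism classes, so $\mathbb{P}(R)/\|$ is well defined and (as parallel points are mutually non-distant) loop-free, and the previous paragraph shows the induced map $\bar\phi:\mathbb{P}(R)/\|\to\mathbb{P}(R/J)$ is a vertex bijection. That $\bar\phi$ is a graph isomorphism is then immediate from $(\ast)$: for distinct classes $[p]\neq[q]$ we have $[p]\triangle[q]\iff p\triangle q\iff\phi(p)\triangle\phi(q)\iff\bar\phi[p]\triangle\bar\phi[q]$. I expect the only genuine subtlety to be the radical computation of the second paragraph; everything after it is formal.
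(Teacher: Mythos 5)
Your argument is correct, but it is worth noting that the paper does not actually prove this statement at all: it defers entirely to Theorem 2.2 of Blunck--Havlicek \cite{RadPara} (together with the surrounding discussion of radical parallelism). What you have written is therefore a self-contained substitute for that citation, and its engine is the right one: the standard facts that $J\left(\mathrm{M}_2(R)\right)=\mathrm{M}_2(J)$ and $\mathrm{M}_2(R)/\mathrm{M}_2(J)\cong\mathrm{M}_2(R/J)$, plus lifting of units modulo the Jacobson radical, give your criterion $(\ast)$ that a $2\times2$ matrix is invertible iff its reduction is, and from this both preservation and reflection of distantness follow at once. The bookkeeping is also sound: surjectivity by lifting an admissible pair together with its completing matrix, fibres equal to parallelism classes (the forward inclusion from $(\ast)$, the reverse from the fact --- already recorded in the paper via the earlier propositions --- that parallelism classes have cardinality $\left\vert J\right\vert$, hence are singletons over the semisimple quotient $R/J$ where the radical vanishes), and well-definedness of the quotient graph because parallel points share distant-sets and are mutually non-distant. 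The one thing your write-up leans on implicitly is that \emph{every} parallelism class of $\mathbb{P}(R)$ has size $\left\vert J\right\vert$, not just the class of $R(1,0)$; this needs the observation that $\mathrm{GL}_2(R)$ acts vertex-transitively and that graph automorphisms preserve radical parallelism (being defined purely in terms of distant-sets), which the paper does state but which you should make explicit. With that said, your proof buys independence from the external reference at the cost of a page of standard radical theory, whereas the paper buys brevity at the cost of opacity; for a reader who wants to see why the Jacobson radical is the right thing to quotient by, your version is the more informative one.
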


This extremely convenient result means that, in the finite (or, generally, Artinian) case, we can easily deal with projective lines over rings with non-trivial Jacobson radical by reducing to the quotient, and only need to worry about semisimple rings, which, by the Artin-Wedderburn theorem, means direct sums of matrix rings over finite fields (in the finite case). As we already know how to deal with direct sums, our remaining task is to characterise, so far as possible, the projective lines over matrix rings over finite fields.

Let us start by counting the points of $\mathbb{P}\left(\mathrm{M}_m\left(q\right)\right)$, the projective line over the ring of $m\times m$ matrices over the field $\mathbb{F}_q$ of order $q$.

\begin{thm}

$\left\vert\mathbb{P}\left(\mathrm{M}_m\left(q\right)\right)\right\vert$ is the $q$-binomial coefficient $\left\lbrack\begin{array}{ccc}2m\\m\end{array}\right\rbrack_q=\displaystyle\prod_{k=0}^{m-1}\displaystyle\frac{q^{2m-k}-1}{q^{k+1}-1}$.

\end{thm}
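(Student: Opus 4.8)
The plan is to identify the points of $\mathbb{P}\left(\mathrm{M}_m\left(q\right)\right)$ with the $m$-dimensional subspaces of a $2m$-dimensional $\mathbb{F}_q$-vector space, after which the count is the $q$-binomial coefficient essentially by its own definition. Write $R=\mathrm{M}_m\left(q\right)$. Since the points of $\mathbb{P}\left(R\right)$ are left $R$-submodules of $R^2$ (recall a point is $R\left(a,b\right)=\left\{r\left(a,b\right):r\in R\right\}$, and same-point means differing by a left unit), I would work with left modules. The ring $R$ is simple Artinian with a single simple left module $V\cong\mathbb{F}_q^m$, the column vectors under left multiplication; decomposing a matrix into its $m$ columns gives $R\cong V^m$ as a left $R$-module, whence $R^2\cong V^{2m}$.

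First I would pin down which submodules are points. A free rank-$1$ submodule of $R^2$ is exactly a submodule isomorphic to $R\cong V^m$, i.e.\ a submodule of $V^{2m}$ isomorphic to $V^m$. Because $R^2\cong V^{2m}$ is semisimple, every such submodule is a direct summand, and its complement is isomorphic to $V^{2m}/V^m\cong V^m\cong R$, hence automatically free of rank $1$. So the ``free rank-$1$ complement'' clause in the definition of a point imposes nothing extra in this case, and the points of $\mathbb{P}\left(R\right)$ are precisely the submodules of $V^{2m}$ isomorphic to $V^m$.

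The remaining task is to count these submodules, and this is where I expect the only real subtlety to lie: turning the evident Morita picture into a precise bijection rather than gesturing at it. Via the Morita equivalence between $R=\mathrm{M}_m\left(q\right)$ and $\mathbb{F}_q$ — concretely the exact functor $\mathrm{Hom}_R\left(V,-\right)$, which sends $V$ to $\mathbb{F}_q$ and hence $V^{2m}$ to $\mathbb{F}_q^{2m}$ — the lattice of submodules of $V^{2m}$ is carried isomorphically onto the lattice of subspaces of $\mathbb{F}_q^{2m}$, with a submodule isomorphic to $V^m$ going to a subspace of dimension $m$. Thus the points correspond bijectively to the $m$-dimensional subspaces of $\mathbb{F}_q^{2m}$, of which there are, by definition, $\left[\begin{array}{c}2m\\m\end{array}\right]_q$, with the stated product expansion.

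If a self-contained count is preferred to invoking Morita theory, I would instead argue by orbit--stabilizer, since the points form a single $\mathrm{GL}_2\left(R\right)$-orbit. Using the block isomorphism $\mathrm{GL}_2\left(\mathrm{M}_m\left(q\right)\right)\cong\mathrm{GL}_{2m}\left(q\right)$ and computing that the stabilizer of $R\left(1,0\right)$ is the parabolic $\left\{\left(\begin{array}{cc}\alpha&0\\\gamma&\delta\end{array}\right):\alpha,\delta\in\mathrm{GL}_m\left(q\right),\ \gamma\in\mathrm{M}_m\left(q\right)\right\}$ of order $\left\vert\mathrm{GL}_m\left(q\right)\right\vert^2q^{m^2}$, the number of points equals $\left\vert\mathrm{GL}_{2m}\left(q\right)\right\vert/\left(\left\vert\mathrm{GL}_m\left(q\right)\right\vert^2q^{m^2}\right)$. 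A short manipulation using $\left\vert\mathrm{GL}_n\left(q\right)\right\vert=q^{\binom{n}{2}}\prod_{j=1}^n\left(q^j-1\right)$ shows the powers of $q$ cancel, and the index collapses to $\prod_{k=0}^{m-1}\frac{q^{2m-k}-1}{q^{k+1}-1}$, matching the first computation and completing the proof.
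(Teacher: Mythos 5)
Your proposal is correct, and both of your arguments land on the same count, but the route differs from the paper's in instructive ways. The paper works entirely with admissible pairs: it observes that $\mathrm{GL}_2\left(\mathrm{M}_m(q)\right)$ is $\mathrm{GL}_{2m}(q)$ in disguise, reads an admissible pair $\left(a,b\right)$ as an $m\times2m$ matrix whose rows are linearly independent, and notes that left multiplication by units of $\mathrm{M}_m(q)$ runs over all bases of the row space; hence points correspond to $m$-dimensional subspaces of $\mathbb{F}_q^{2m}$ and the count is $\left\lbrack\begin{array}{c}2m\\m\end{array}\right\rbrack_q$ by definition. Your first argument reaches the same bijection with $m$-dimensional subspaces, but abstractly, via the semisimple module structure $R^2\cong V^{2m}$ and the Morita equivalence $\mathrm{Hom}_R\left(V,-\right)$; what this buys you is a clean justification that the ``free rank-$1$ complement'' condition is automatic here (the paper silently relies on the single-orbit description of points rather than addressing this), at the cost of invoking machinery the paper avoids. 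Your second argument, by orbit--stabilizer against the parabolic subgroup, is genuinely different from anything in the paper: it never identifies points with subspaces at all, and it has the virtue of being a self-contained verification of the closed product formula (the paper simply asserts that the number of $m$-dimensional subspaces equals the stated product), though it leans on the single-orbit fact and on correctly computing the stabilizer of $R\left(1,0\right)$ under the right action, both of which you handle correctly. Either version is a complete proof.
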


\begin{proof}

We shall prove this by assigning an admissible pair to each point. First we observe, from inspecting the form of the multiplication, that $\mathrm{M}_2\left(\mathrm{M}_m(q)\right)$ is a lightly disguised version of $\mathrm{M}_{2m}(q)$, and likewise $\mathrm{GL}_2\left(\mathrm{M}_m(q)\right)$ is $\mathrm{GL}_{2m}(q)$. Each admissible pair forms the top row of an element of $\mathrm{GL}_2\left(\mathrm{M}_m(q)\right)$, but we can treat these top rows equivalently as the top $m$ rows of an element of $\mathrm{M}_{2m}(q)$, i.e. as an $m\times2m$ matrix in its own right. That these $m$ rows actually belong to an invertible $2m\times2m$ matrix amounts to the requirement that they span an $m$-dimensional space (that is, that there are no linear relations among them).

Now, two admissible pairs are equivalent just if they are related by multiplication on the left by an invertible element of the ring, i.e. by an element of $\mathrm{GL}_m(q)$. The left action of $\mathrm{GL}_m(q)$ interchanges every basis of the given subspace while preserving the subspace itself. Hence we have one point of $\mathbb{P}\left(\mathrm{M}_m(q)\right)$ for each $m$-dimensional subspace of a $2m$ dimensional space over $\mathbb{F}_q$. The number of these subspaces is the $q$-binomial coefficient.\end{proof}

It is also true, by continuing the argument of the proof to all $2m$ rows of the matrix, that two points are distant just when the corresponding subspaces have trivial intersection. (These facts about matrix rings are mentioned in passing in \cite{Spreads}).

Here are the values for the smallest $m$, after which the polynomials become more complicated:

\begin{center}
$\begin{array}{|c|c|c|}
\hline
\mathbf{m}&\mathbf{\left\lbrack\begin{array}{ccc}\mathbf{2m}\\\mathbf{m}\end{array}\right\rbrack_q}\\
\hline
0&1\\
\hline
1&q+1\\
\hline
2&q^4+q^3+2q^2+q+1\\
\hline
3&q^9+q^8+2q^7+3q^6+3q^5+3q^4+3q^3+2q^2+q+1\\
\hline
\end{array}$
\end{center}

(Of course, $m=0$ corresponds to the trivial ring, and $m=1$ to the field of order $q$.)

\begin{thm}

For a matrix ring $\mathrm{M}_m(q)$,

a) The number of points distant to a given point is $q^{m^2}$.

b) The number of points distant to a pair of mutually distant points is $\displaystyle\prod_{k=0}^{m-1}{\left(q^m-q^k\right)}$.

c) $\cap1N=\displaystyle\prod_{k=0}^{m-1}\displaystyle\frac{q^{2m-k}-1}{q^{k+1}-1}-q^{m^2}$

d) $\cap2N=\displaystyle\prod_{k=0}^{m-1}\displaystyle\frac{q^{2m-k}-1}{q^{k+1}-1}-2q^{m^2}+\displaystyle\prod_{k=0}^{m-1}{\left(q^m-q^k\right)}$.

For a product of matrix rings $\displaystyle\prod_i{M_{m_i}\left(q_i\right)}$

e) $\cap1N=\displaystyle\prod_i{\displaystyle\prod_{k=0}^{m_i-1}\displaystyle\frac{q_i^{2m_i-k}-1}{q_i^{k_i+1}-1}}-\displaystyle\prod_i{q_i^{m_i^2}}$

f) $\cap2N=\displaystyle\prod_i{\displaystyle\prod_{k=0}^{m_i-1}\displaystyle\frac{q_i^{2m_i-k}-1}{q_i^{k_i+1}-1}}-2\displaystyle\prod_i{q_i^{m_i^2}}+\displaystyle\prod_i\displaystyle\prod_{k=0}^{m_i-1}{\left(q_i^{m_i}-q_i^k\right)}$

\end{thm}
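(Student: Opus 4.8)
The plan is to work throughout in the geometric model established just above: the points of $\mathbb{P}\left(\mathrm{M}_m(q)\right)$ are the $m$-dimensional subspaces of $V=\mathbb{F}_q^{2m}$, and two points are distant exactly when the corresponding subspaces meet only in $0$, i.e. are complementary. Part (a) is then simply the count of complements of a fixed $m$-dimensional subspace $W$. Fixing one complement $U$ so that $V=W\oplus U$, every complement of $W$ is the graph $\{u+\phi(u):u\in U\}$ of a unique linear map $\phi\colon U\to W$, and conversely each such graph is a complement. Since $\dim U=\dim W=m$, there are $q^{m^2}$ such maps, which gives (a).

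For (b), let the two mutually distant points be complementary subspaces $W_1,W_2$, so $V=W_1\oplus W_2$. A point distant to both is a subspace $W'$ that is a common complement of $W_1$ and $W_2$. Writing $W'$ as the graph of a linear map $\phi\colon W_2\to W_1$ (which, as in (a), already encodes complementarity with $W_1$), I would check that $W'\cap W_2=0$ holds precisely when $\phi$ is injective, hence, by equality of dimensions, an isomorphism. Thus common complements correspond bijectively to the invertible maps $W_2\to W_1$, of which there are $\lvert\mathrm{GL}_m(q)\rvert=\prod_{k=0}^{m-1}\left(q^m-q^k\right)$, giving (b). This identification of common complements with $\mathrm{GL}_m(q)$ is the one genuinely geometric step, and the place where I expect to have to be careful; the remaining parts are bookkeeping.

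Parts (c) and (d) follow by the same inclusion-exclusion already used for $\cap n\mathrm{N}$ in the commutative case. The neighbourhood of a point is the complement of its distant-set, so $\cap1\mathrm{N}$ is $\lvert\mathbb{P}\left(\mathrm{M}_m(q)\right)\rvert$ minus the $q^{m^2}$ distant points of (a), yielding (c). Likewise $\cap2\mathrm{N}$ is the total minus the union of the two distant-sets; by inclusion-exclusion that union has $2q^{m^2}-\prod_{k=0}^{m-1}\left(q^m-q^k\right)$ points, the subtracted term being the common distant-set counted in (b), and subtracting from the $q$-binomial total gives (d).

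Finally, (e) and (f) are obtained from (c) and (d) together with the decomposition theorem, Proposition \ref{decomposition}: the distant graph of $\prod_i\mathrm{M}_{m_i}(q_i)$ is the tensor product of the factor graphs, so a point is distant to another exactly when it is distant componentwise. Hence each distant-set is the product of the factor distant-sets, with size $\prod_i q_i^{m_i^2}$; the common distant-set of two mutually distant points has size $\prod_i\prod_{k=0}^{m_i-1}\left(q_i^{m_i}-q_i^k\right)$; and the total number of points is $\prod_i\lvert\mathbb{P}\left(\mathrm{M}_{m_i}(q_i)\right)\rvert$. Feeding these product expressions into the same inclusion-exclusion as in (c) and (d) yields (e) and (f). The only point needing attention is that two points of the product are mutually distant precisely when all their components are, which is exactly the tensor-product adjacency rule, so the hypotheses of (e) and (f) factor into those of (c) and (d) over the individual matrix rings.
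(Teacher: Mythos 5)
Your proposal is correct, but parts (a) and (b) are argued by a genuinely different route from the paper. The paper stays inside the ring: by vertex-transitivity it counts the points distant to $R\left(1,0\right)$, which are exactly the $R\left(r,1\right)$ for $r\in R$, giving $\left\vert R\right\vert=q^{m^2}$; by edge-transitivity it counts the points distant to both $R\left(1,0\right)$ and $R\left(0,1\right)$, which are the $R\left(u,1\right)$ for units $u$, giving $\left\vert\mathrm{GL}_m(q)\right\vert$. That argument is valid for an arbitrary ring $R$ (the counts are $\left\vert R\right\vert$ and $\left\vert R^*\right\vert$), and only the last step specializes to matrix rings. You instead work in the subspace model --- points as $m$-dimensional subspaces of $\mathbb{F}_q^{2m}$, distance as complementarity --- and count complements as graphs of linear maps $U\to W$, and common complements as graphs of invertible maps $W_2\to W_1$. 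Your counts are right and the graph-of-a-linear-map argument is standard, but note two things: first, your route leans on the identification of distance with trivial intersection of subspaces, which the paper only asserts in passing (after the point-count proposition) rather than proves in detail; second, you lose the generality that makes the paper's version immediate for any finite ring. What you gain is a transparent geometric picture, and your bijection between common complements and $\mathrm{GL}_m(q)$ is really the coordinate-free form of the paper's observation that the points distant to both $R\left(1,0\right)$ and $R\left(0,1\right)$ are indexed by units. Parts (c)--(f) coincide with the paper: inclusion-exclusion on the distant-sets and the tensor-product decomposition of Proposition \ref{decomposition}.
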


\begin{proof}

(Note that a) and b) follow straightforwardly from results for a general ring $R$.)

a) As the distant graph is vertex transitive, each vertex has the same degree. Hence we can choose, for example, to count the points distant to $R\left(1,0\right)$. The points distant to this are just $R\left(r,1\right)$ for $r\in R$, so their number is just the order of the ring; for $\mathrm{M}_m(q)$, this is $q^{m^2}$.

b) As the distant graph is also edge transitive, the number of points distant to two mutually distant points is always the same, so we can choose points $R\left(1,0\right)$ and $R\left(0,1\right)$. The points distant to both of these are just those of the form $R\left(u,1\right)$, where $u$ is a unit of $R$. The units of $\mathrm{M}_m(q)$ are just the elements of $\mathrm{GL}_m(q)$, and $\left\vert \mathrm{GL}_m(q)\right\vert=\displaystyle\prod_{k=0}^{m-1}{\left(q^m-q^k\right)}$.

c) and d) These follow from inclusion-exclusion on the clique counts, as with the commutative case.

e) and f) Each term is a count of $k$-cliques, for $k$ successively equal to $1$, $2$ and (in the second case) $3$, which is given by the products of the clique-counts for the summand rings.\end{proof}

For small $m$, this gives us the following table for $\mathrm{M}_m(q)$:

\begin{center}
$\begin{array}{|c|c|c|}
\hline
\mathbf{m}&\mathbf{\cap1N}&\mathbf{\cap2N}\\
\hline
0&0&0\\
\hline
1&1&0\\
\hline
2&q^3+2q^2+q+1&q^2+2q+1\\
\hline
3&q^8+2q^7+3q^6+3q^5+3q^4+3q^3+2q^2+q+1&q^7+3q^6+4q^5+4q^4+2q^3+2q^2+q+1\\
\hline
\end{array}$
\end{center}

\begin{rem} Because the distant-graph over the trivial ring has a self-loop, the formulas for $m=0$ imply that there is $1$ $k$-clique for every $k$. In the other cases, with no self-loops, clique-counting works properly.\end{rem}

We can also try to count $4$-cliques and hence calculate $\cap3N$. As $\mathrm{GL}_2(R)$ acts transitively on triangles for any projective line, we can, without loss of generality, take three of the points of the clique to be $\left(1,0\right)$, $\left(0,1\right)$ and $\left(1,1\right)$ and ask how many ways there are to extend this. Points distant to all $3$ of these points must be of the form $\left(u,1\right)$ where the $u$ are invertible elements such that $u-1$ is also invertible; in terms of matrices, this means matrices with no eigenvalues equal to $0$ or $1$. We can handle this by counting matrices which {\it do} have such eigenvalues, and excluding them.

For this purpose, we will introduce a lemma on inclusion-exclusion. Consider the following setup: a vector space $X$ of dimension $m$ over $\mathbb{F}_q$, a set $\mathcal{G}$, and a relation, {\it capture}, between elements of $\mathcal{G}$ and subspaces of $X$, subject to the following conditions:

a) If $g\in\mathcal{G}$ captures a subspace $U$, then it also captures all subspaces of $U$.

b) The cardinality of the set of elements capturing a given subspace depends only on the dimension of that subspace.

For concreteness, the example we have in mind has $\mathcal{G}$ being the set of endomorphisms of $X$ and ``captures $U$" being ``acts as the identity when restricted to $U$".

Let us write $W_U$ for the set of elements that capture $U$, and $W_{m,k}$ for the cardinality of the set of elements capturing a subspace of dimension $k$ in a space of dimension $m$. Now condition a) implies that if $U\subseteq V$ then $W_U\supseteq W_V$. Let us write $W'_U$ for the subset of $W_U$ whose elements do not lie in $W_V$ for any $V$ properly containing $U$, and $W'_{m,k}$ for the cardinality of $W'_U$ for $U$ of dimension $k$. Then we have the following.

\begin{lemma}\label{incexc}

\[W'_{m,k}=\sum_{i=0}^{m-k}\left(-1\right)^i\left\lbrack\begin{array}{cc}m-k\\i\end{array}\right\rbrack_q q^{\frac{i(i-1)}{2}}W_{m,k+i}\]

\end{lemma}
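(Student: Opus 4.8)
The plan is to collapse everything onto a single triangular system indexed by the lattice of subspaces of $X$ and then to invert it. First I would extract the structural consequence of conditions a) and b): for a fixed $g\in\mathcal{G}$ the collection of subspaces it captures is closed downwards, and — as in the motivating example, where $g$ acting as the identity on $U$ and on $U'$ forces it to act as the identity on $U+U'$ — this collection has a unique maximal member $M(g)$. Thus $g$ captures $U$ exactly when $U\subseteq M(g)$, and $g\in W'_U$ exactly when $M(g)=U$. Sending each $g$ to its top captured subspace $M(g)$ therefore sorts the elements capturing a fixed $U$ according to which $V\supseteq U$ they pin down precisely.

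This gives at once the disjoint decomposition $W_U=\bigsqcup_{V\supseteq U}W'_V$. Taking cardinalities and using b) to replace $|W'_V|$ by $W'_{m,\dim V}$, I obtain the forward identity $W_{m,k}=\sum_{i=0}^{m-k}\left[\begin{smallmatrix}m-k\\ i\end{smallmatrix}\right]_q W'_{m,k+i}$, the coefficient being the number of $(k+i)$-dimensional subspaces $V$ with $U\subseteq V$, i.e. the number of $i$-dimensional subspaces of the $(m-k)$-dimensional quotient $X/U$. The lemma is precisely the inversion of this relation.

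To perform the inversion I would invoke M\"obius inversion on the lattice of subspaces of a finite vector space, whose M\"obius function between $U\subseteq V$ with $\dim V-\dim U=i$ equals $\mu(U,V)=(-1)^i q^{i(i-1)/2}$. Applied to the pair $(W,W')$ on the interval from $U$ to $X$, it turns $W_U=\sum_{V\supseteq U}W'_V$ into $W'_U=\sum_{V\supseteq U}\mu(U,V)\,W_V$; grouping the $V$'s by their dimension $k+i$ and counting them again with $\left[\begin{smallmatrix}m-k\\ i\end{smallmatrix}\right]_q$ yields exactly $W'_{m,k}=\sum_{i=0}^{m-k}(-1)^i\left[\begin{smallmatrix}m-k\\ i\end{smallmatrix}\right]_q q^{i(i-1)/2}W_{m,k+i}$. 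Equivalently, honouring the inclusion--exclusion framing, one writes $W'_U=W_U\setminus\bigcup_{\substack{V\supseteq U\\ \dim V=\dim U+1}}W_V$, expands, observes that each multiple intersection $\bigcap_{V\in S}W_V$ equals $W_{\sum_{V\in S}V}$ by the same unique-maximal property, and collects terms by the dimension of the join.

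The routine parts are the two subspace counts, both being the standard fact that $(k+i)$-spaces through a fixed $k$-space correspond to $i$-spaces in the quotient. The step carrying the real content — and the one I expect to be the main obstacle — is the emergence of the factor $q^{i(i-1)/2}$: in the M\"obius-inversion route it is the value of the subspace-lattice M\"obius function, which must be cited or rederived, while in the direct inclusion--exclusion route it appears only after one evaluates, with signs, the number of families of covering subspaces (equivalently, tuples of lines in $X/U$) whose join has a prescribed dimension $i$. Checking that this signed count collapses to $(-1)^i q^{i(i-1)/2}\left[\begin{smallmatrix}m-k\\ i\end{smallmatrix}\right]_q$ is exactly the $q$-analogue of binomial inversion, and that is where the bookkeeping concentrates.
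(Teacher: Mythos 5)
Your route is, at bottom, the same as the paper's: the coefficient $\left(-1\right)^i q^{i(i-1)/2}$ is the M\"obius function of the subspace lattice, and the paper's induction (``each $(k+j)$-space has so far been included/excluded $-\left(-1\right)^iq^{i(i-1)/2}$ times, so add it back in'') is precisely the recursive computation of that M\"obius function via the $q$-binomial identity $\sum_{j=0}^{i}\left(-1\right)^jq^{j(j-1)/2}\left\lbrack\begin{smallmatrix}i\\j\end{smallmatrix}\right\rbrack_q=0$. Citing the known value $\mu(U,V)=\left(-1\right)^iq^{i(i-1)/2}$ instead of rederiving it changes the packaging, not the substance.

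There is, however, a genuine gap at your very first step, and it is load-bearing. You assert as a ``structural consequence of conditions a) and b)'' that each $g$ has a unique maximal captured subspace $M(g)$ (equivalently, that the subspaces captured by a fixed $g$ are closed under sums). That does not follow from a) and b); your only evidence is the motivating example. Concretely, take $m=2$, $q=2$, let $L_1,L_2,L_3$ be the three lines of $X=\mathbb{F}_2^2$, and let $\mathcal{G}=\{g_1,g_2,g_3\}$ where $g_i$ captures $\{0\}$ and the two lines other than $L_i$. Conditions a) and b) hold, with $W_{2,0}=3$, $W_{2,1}=2$, $W_{2,2}=0$; but no $g_i$ has a unique maximal captured subspace, the decomposition of $W_U$ as a disjoint union of the $W'_V$ over $V\supseteq U$ fails, and the lemma itself fails: $W'_{2,0}=0$ while the claimed right-hand side is $3-3\cdot2+2\cdot0=-3$. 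The unique-maximal property is exactly what makes $\{V\supseteq U: g\in W_V\}$ an interval $[U,M(g)]$, over which the M\"obius values sum to $1$ if $M(g)=U$ and to $0$ otherwise; without it both your forward identity and its inversion collapse. In fairness, the property does hold in both of the paper's applications ($M(g)$ is $\ker g$, resp.\ $\ker(g-I)$), and the paper's own proof tacitly relies on the same interval structure when it passes from ``each superspace is net counted zero times'' to a count of elements of $\mathcal{G}$. But as a derivation from the stated hypotheses alone your argument does not go through: you must either add join-closure as an explicit hypothesis or verify it in each application rather than claim it as a consequence of a) and b).
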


\begin{proof}

In the $q$-binomial theorem,

\[\prod_{i=0}^{m-1}\left(1+q^i t\right)=\sum_{i=0}^m t^i q^\frac{i\left(i-1\right)}{2}\left\lbrack\begin{array}{c}m\\i\end{array}\right\rbrack_q\]

substitute $t=-1$ to get

\[\sum_{i=0}^{m} \left(-1\right)^i q^\frac{i\left(i-1\right)}{2}\left\lbrack\begin{array}{c}m\\i\end{array}\right\rbrack_q=0\]

Hence

\[\sum_{i=0}^{m-1} \left(-1\right)^i q^\frac{i\left(i-1\right)}{2}\left\lbrack\begin{array}{c}m\\i\end{array}\right\rbrack_q=-\left(-1\right)^m q^\frac{m\left(m-1\right)}{2}\]

Given a subspace $U_k$ of dimension $k$ in a space of dimension $m$, let us calculate $W'_{m,k}$ by inclusion-exclusion. By Poincar\'{e} duality $U_k$ is contained in $\left\lbrack\begin{array}{c}m-k\\m-(k+i)\end{array}\right\rbrack_q=\left\lbrack\begin{array}{c}m-k\\i\end{array}\right\rbrack_q$ subspaces of dimension $k+i$. Also, each of these is captured by $W_{m,k+i}$ elements of $\mathcal{G}$. Hence if each such subspace appears with a factor of $\left(-1\right)^i q^\frac{i(i-1)}{2}$ in the inclusion-exclusion, then the lemma will be proved. We show this by induction.

We include $U_k$ itself once. As $\left(-1\right)^i q^\frac{i(i-1)}{2}=1$ when $i=0$, the induction starts correctly.

Now suppose that each $k+j$-space containing $U_k$ has been included/excluded $\left(-1\right)^j q^\frac{j(j-1)}{2}$ times for $0\le j<i$. Consider an $i$-space $U_i\supset U_k$. For $0\le j<i$, consider the $j$-spaces $U_j$ with $U_k\subseteq U_j\subset U_i$. There are $\left\lbrack\begin{array}{c}i\\j\end{array}\right\rbrack_q$ such subspaces, and each one will have been included $\left(-1\right)^j q^\frac{j(j-1)}{2}$ times. Hence $U_i$ has already been included/excluded $\sum_{j=0}^{i-1} \left(-1\right)^j q^\frac{j\left(j-1\right)}{2}\left\lbrack\begin{array}{c}i\\j\end{array}\right\rbrack_q=-\left(-1\right)^i q^\frac{i\left(i-1\right)}{2}$ times. So to cancel this, we need to add it back in $\left(-1\right)^i q^\frac{i\left(i-1\right)}{2}$ times. This completes the induction.

\end{proof}

\begin{example} Let $\mathcal{G}$ be the set of all endomorphisms, and let an endomorphism capture a subspace if its restriction to that subspace is the zero endomorphism. In an appropriate basis, an endomorphism capturing a subspace of dimension $i$ is given by a matrix with the first $i$ columns all zero (and no restriction on the other columns), so $W_{m,i}=q^{m(m-i)}$. Then the number of invertible endomorphisms is just $W'_{m,0}$. We have, from Lemma \ref{incexc},

$\begin{array}{lll}W'_{m,0}&=&\sum_{i=0}^m\left(-1\right)^i\left\lbrack\begin{array}{c}m\\i\end{array}\right\rbrack_q q^\frac{i(i-1)}{2}q^{m(m-i)}\\&=&q^{m^2}\sum_{i=0}^m\left(-q^{-m}\right)^i\left\lbrack\begin{array}{c}m\\i\end{array}\right\rbrack_q q^\frac{i(i-1)}{2}\\&=&q^{m^2}\prod_{i=0}^{m-1}\left(1-q^{-m}q^i\right)\\&=&\prod_{i=0}^{m-1}\left(q^m-q^i\right)\end{array}$

\noindent where the third step follows from the $q$-binomial theorem. \end{example}

\begin{thm}

In the distant graph of $\mathbb{P}\left(M_m(q)\right)$, the number of $4$-cliques containing a given $3$-clique is

\[\left(-1\right)^mq^\frac{m(m-1)}{2}\displaystyle\sum_{i=0}^m\displaystyle\prod_{j=0}^{m-i-1}\left(1-q^{m-j}\right)\]

or, equivalently,

\[\left(-1\right)^m q^\frac{m(m-1)}{2}\left(\left(1-q^m\right)\left(\left(1-q^{m-1}\right)\ldots\left(\left(1-q^2\right)\left(\left(1-q\right)+1\right)+1\right)\ldots+1\right)+1\right)\]

\end{thm}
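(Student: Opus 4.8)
The plan is to reduce the clique count to a matrix-counting problem and then apply Lemma \ref{incexc} with a well-chosen \emph{capture} relation. First I would use the transitivity of $\mathrm{GL}_2(R)$ on triangles to fix the three clique points as $\left(1,0\right)$, $\left(0,1\right)$ and $\left(1,1\right)$. A direct check (as in the reduction preceding the lemma) shows that a fourth point distant to all three must have the form $R\left(u,1\right)$ with both $u$ and $u-1$ units; over $\mathrm{M}_m(q)$ this means exactly the matrices $u$ having neither $0$ nor $1$ as an eigenvalue, i.e. with $u$ and $u-I$ both in $\mathrm{GL}_m(q)$. Hence the required number is the number of such $u$.

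The crucial choice is to take $\mathcal{G}=\mathrm{GL}_m(q)$ (not all endomorphisms) and to declare that $u$ \emph{captures} $U$ when $u$ restricts to the identity on $U$. Conditions (a) and (b) of the lemma are then immediate, and with this setup $W'_{m,0}$ counts precisely the elements of $\mathrm{GL}_m(q)$ fixing no nonzero vector, that is, those $u\in\mathrm{GL}_m(q)$ with $u-I$ also invertible --- exactly the matrices we want. Restricting $\mathcal{G}$ to $\mathrm{GL}_m(q)$ is what simultaneously enforces ``no eigenvalue $0$'' (via $\mathcal{G}$) and ``no eigenvalue $1$'' (via the primed count $W'$); using all endomorphisms with this same capture relation would merely recompute $\left|\mathrm{GL}_m(q)\right|$, as in the preceding Example.

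Next I would compute $W_{m,i}$, the number of $u\in\mathrm{GL}_m(q)$ fixing a given $i$-dimensional subspace $U$ pointwise. Choosing a basis adapted to $U$ puts such a $u$ in block form $\left(\begin{array}{cc}I_i&B\\0&D\end{array}\right)$ with $B$ arbitrary and $D\in\mathrm{GL}_{m-i}(q)$, so $W_{m,i}=q^{i(m-i)}\left|\mathrm{GL}_{m-i}(q)\right|$. Substituting this into Lemma \ref{incexc} with $k=0$ gives $W'_{m,0}$ as a single sum over $i$. The remaining work is algebraic: writing $\left|\mathrm{GL}_l(q)\right|=q^{l(l-1)/2}\prod_{s=1}^{l}\left(q^s-1\right)$, reindexing by $l=m-i$, and pulling out the common factor $\left(-1\right)^m q^{m(m-1)/2}$. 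The key simplification is the term-by-term identity
\[\left\lbrack\begin{array}{c}m\\l\end{array}\right\rbrack_q\prod_{s=1}^{l}\left(1-q^s\right)=\prod_{t=m-l+1}^{m}\left(1-q^t\right),\]
which follows at once from the explicit formula for the $q$-binomial coefficient, since its denominator $\prod_{s=1}^{l}\left(q^s-1\right)$ cancels against $\prod_{s=1}^{l}\left(1-q^s\right)$ up to the sign $\left(-1\right)^l$. This turns the sum into $\sum_{i=0}^{m}\prod_{j=0}^{m-i-1}\left(1-q^{m-j}\right)$, the claimed closed form; the nested ``Horner'' expression is then just the same sum with the common factors successively extracted from the outside in.

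I expect the main obstacle to be conceptual rather than computational: spotting that one should run the inclusion--exclusion \emph{inside} $\mathrm{GL}_m(q)$ with ``fixes $U$ pointwise'' as the capture relation, so that a single application of the lemma disposes of both forbidden eigenvalues at once. Once $\mathcal{G}$ and the capture relation are fixed, computing $W_{m,i}$ and performing the $q$-binomial cancellation are routine; I would sanity-check the outcome against the value $q-2$ for $m=1$ and $q^4-2q^3-q^2+3q$ for $m=2$, both of which can be obtained independently by elementary inclusion--exclusion on the events ``$u$ singular'' and ``$u-I$ singular''.
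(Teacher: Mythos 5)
Your proposal is correct and follows essentially the same route as the paper: reduce to counting matrices with neither $0$ nor $1$ as an eigenvalue, apply Lemma \ref{incexc} with $\mathcal{G}=\mathrm{GL}_m(q)$ and ``captures $U$'' meaning ``restricts to the identity on $U$'', compute $W_{m,i}=q^{i(m-i)}\left\vert\mathrm{GL}_{m-i}(q)\right\vert$ via the same block decomposition, and simplify by cancelling the $q$-binomial against the $\mathrm{GL}$ order. Your term-by-term identity is just a reindexed form of the paper's cancellation step, and your sanity checks at $m=1,2$ agree with the values quoted later in the paper.
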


\begin{proof}

From the discussion earlier, the number of $4$-cliques containing a given $3$-clique is equal to the number of matrices in $M_m(q)$ which have neither $0$ nor $1$ as an eigenvalue. Let $\mathcal{G}$ be the set of invertible endomorphism of $\mathbb{F}_q^m$, and let an endomorphism capture a subspace if its restriction to that subspace is the identity. Then the number we seek is $W'_{m,0}$.

Now an element which captures the $k$-subspace of vectors whose coordinates (in some suitable basis) are all $0$ after the $k$th will have a matrix of the form

\[\left(\begin{array}{c|c}I&*\\\hline0&G\end{array}\right)\]

\noindent where $I$ is a $k\times k$ identity matrix, $*$ is anything and $G$ is invertible. Hence $W_{m,k}=q^{k(m-k)}\prod_{i=0}^{m-k-1}\left(q^{m-k}-q^i\right)$

Then

\[\begin{array}{lll}W'_{m,0}&=&\displaystyle\sum_{i=0}^m\left(-1\right)^i\left\lbrack\begin{array}{c}m\\i\end{array}\right\rbrack_q q^\frac{i(i-1)}{2}q^{i(m-i)}\displaystyle\prod_{j=0}^{m-i-1}\left(q^{m-i}-q^j\right)\\
&=&\displaystyle\sum_{i=0}^m\left(-1\right)^i\left(\displaystyle\prod_{j=0}^{m-i-1}\frac{q^m-q^j}{q^{m-i}-q^j}\right)q^\frac{i(i-1)}{2}q^{i(m-i)}\displaystyle\prod_{j=0}^{m-i-1}\left(q^{m-i}-q^j\right)\\&=&\displaystyle\sum_{i=0}^m\left(-1\right)^i\left(\displaystyle\prod_{j=0}^{m-i-1}\left(q^m-q^j\right)\right)q^\frac{i(i-1)}{2}q^{i(m-i)}\\&=&\displaystyle\sum_{i=0}^m\left(-1\right)^i\left(\displaystyle\prod_{j=0}^{m-i-1}\left(q^{m-j}-1\right)\right)q^\frac{(m-i)(m-i-1)}{2}q^\frac{i(i-1)}{2}q^{i(m-i)}\\&=&q^\frac{m(m-1)}{2}\displaystyle\sum_{i=0}^m\left(-1\right)^i\left(\displaystyle\prod_{j=0}^{m-i-1}\left(q^{m-j}-1\right)\right)\\&=&\left(-1\right)^mq^\frac{m(m-1)}{2}\displaystyle\sum_{i=0}^m\displaystyle\prod_{j=0}^{m-i-1}\left(1-q^{m-j}\right)
\end{array}\]

The alternative, nested, form is obtained by gathering terms.

\end{proof}

For $k$-cliques with $k>3$, we cannot produce simple counts in the general case, as we can for commutative rings, because these cliques are no longer all equivalent. For instance, when extending a $k$-clique to a $k+1$-clique, the $k$-cliques fall into distinct classes distinguished by the number of ways it is possible to extend them. In particular, there are cliques which are inextensible although they do not have the maximal order (for which, see below). We give simple concrete examples of this sort of thing in Appendices \ref{extensions} and \ref{inextensible}.

For a ring $R$ with non-trivial Jacobson radical $J$, we can calculate $\cap k N$ for $R/J$, then multiply by $\vert J\vert$ to get $\cap k N$ for $R$, since each vertex of $\mathbb{P}(R)$ is just $\vert J\vert$ ``copies" of a vertex of $\mathbb{P}\left(R/J\right)$.

We can also calculate the maximal order of a clique.

\begin{thm}

The maximal order of a clique in $\mathbb{P}\left(\mathrm{M}_m(q)\right)$ is $q^m+1$.

\end{thm}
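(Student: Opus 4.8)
The plan is to translate the statement into the language of subspaces, using the correspondence established just after the point-count of $\mathbb{P}\left(\mathrm{M}_m(q)\right)$: the points are the $m$-dimensional subspaces of $\mathbb{F}_q^{2m}$, and two points are distant precisely when the corresponding subspaces meet trivially. Since two $m$-subspaces $U,V$ with $U\cap V=0$ satisfy $\dim(U+V)=2m$, trivial intersection is the same as being complementary. Hence a clique in the distant graph is exactly a family of $m$-subspaces that are pairwise disjoint (a partial spread), and the maximal order of a clique is the largest size of such a family.

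First I would establish the upper bound by counting nonzero vectors. Each $m$-subspace contains $q^m-1$ nonzero vectors, and pairwise-trivial intersection means distinct members of the family share no nonzero vector. As $\mathbb{F}_q^{2m}$ has $q^{2m}-1$ nonzero vectors in all, a pairwise-disjoint family can have at most
\[
\frac{q^{2m}-1}{q^m-1}=q^m+1
\]
members, so no clique can have order exceeding $q^m+1$.

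It then remains to exhibit a clique attaining this bound, i.e. a genuine spread. For this I would identify $\mathbb{F}_q^{2m}$ with $\mathbb{F}_{q^m}^{2}$, regarding the degree-$m$ extension field $\mathbb{F}_{q^m}$ as an $m$-dimensional $\mathbb{F}_q$-vector space. The $1$-dimensional $\mathbb{F}_{q^m}$-subspaces of $\mathbb{F}_{q^m}^2$ are $m$-dimensional over $\mathbb{F}_q$, there are $(q^{2m}-1)/(q^m-1)=q^m+1$ of them, and two distinct ones intersect trivially already as $\mathbb{F}_{q^m}$-subspaces, hence also as $\mathbb{F}_q$-subspaces. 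This family is therefore a clique of order $q^m+1$, which together with the upper bound gives the result. (Alternatively one may simply invoke the existence of spreads from \cite{Spreads}.)

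The counting upper bound is routine; the only real content is the existence of a maximum-size family. The main obstacle is thus the construction of the spread: one must check that passing to the field extension genuinely produces $m$-dimensional $\mathbb{F}_q$-subspaces that are pairwise complementary, which is where the arithmetic $q^{2m}-1=(q^m-1)(q^m+1)$ and the field structure do the work.
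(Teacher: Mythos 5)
Your proof is correct, but it takes a genuinely different route from the paper's. The paper normalizes the clique (by vertex- and edge-transitivity) to contain $R\left(1,0\right)$ and $R\left(0,1\right)$, so that the remaining points are $R\left(u,1\right)$ with $u\in\mathrm{GL}_m(q)$ and pairwise invertible differences; the upper bound $q^m-1$ for this residual set comes from observing that the top rows of the $u_i$ must be distinct and nonzero, and the lower bound comes from taking powers $u^i$ of a matrix $u$ whose eigenvalues generate $\mathbb{F}_{q^m}^*$, giving a cyclic clique of order $q^m-1$ and hence $q^m+1$ overall. You instead work entirely in the subspace model (points are $m$-subspaces of $\mathbb{F}_q^{2m}$, distant means trivial intersection --- a correspondence the paper does assert, so you are entitled to it), get the upper bound by counting nonzero vectors, and get the lower bound by field reduction from $\mathbb{P}^1\left(\mathbb{F}_{q^m}\right)$. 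This is exactly the spread interpretation that the paper relegates to the remark \emph{after} its proof (citing Bu's lemma); you have promoted it to the proof itself. What your version buys: the upper bound needs no transitivity or normalization, and the existence argument sidesteps a subtlety in the paper's construction (the paper asks only that the characteristic polynomial of $u$ be irreducible, but its argument actually requires the eigenvalues to be \emph{primitive}, i.e.\ a primitive polynomial, for $\lambda^i\ne1$ to hold for all $0<i<q^m-1$). What the paper's version buys: an explicit clique living inside $\mathrm{GL}_m(q)$ in the same normalized coordinates used for the $3$- and $4$-clique counts, and a concrete cyclic structure on the maximal clique that the appendices then contrast with non-cyclic maximal cliques.
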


\begin{proof}

Suppose that the clique contains $\left(1,0\right)$ and $\left(0,1\right)$. (Since the graph is vertex- and edge-transitive, this is no loss of generality.) Then the remaining points of the clique must be of the form $\left(u,1\right)$ for invertible $u\in R$, i.e., in the present context, elements of $\mathrm{GL}_m(q)$. Two elements of this form can belong to the same clique just if $\left(\begin{array}{ccc}u_1&1\\u_2&1\end{array}\right)$ is invertible, which occurs just when $u_1-u_2$ is invertible. So let us temporarily confine ourselves to the subgraph whose vertices are members of $\mathrm{GL}_m(q)$ and whose edges lie between matrices whose difference is invertible, and work with cliques in this graph. Note that for any set of matrices $\left\{u_i\right\}$ forming a clique, and for any $v\in\mathrm{GL}_m(q)$, the set $\left\{v u_i\right\}$ also forms a clique, so all cliques are translates of ones containing the identity matrix.

Now consider, say, the top rows of all the $u_i$ in a clique. These must all be different from one another, lest $u_i-u_j$ not be invertible for some $i,j$. That leaves a maximum of $q^m$ possibilities, but we must exclude the case of all zeroes. This gives an upper bound of $q^m-1$ to the size of the maximal clique. To see that this bound is attained, consider any matrix $u$ whose characteristic polynomial is irreducible over $\mathbb{F}_q$. Then its eigenvalues are all distinct, and each of them is a generator of the group of units $\mathbb{F}_{q^m}^*$, of order $q^m-1$. So we have, for every eigenvalue $\lambda$, $\lambda^i\ne1$ for $0< i< q^m-1$. Now let $u_i=u^i$ for $0\le i< q^m-1$. Suppose $i>j$. Then $u^i-u^j=u^j\left(u^{i-j}-1\right)$, and, by the foregoing, $u^{i-j}$ can have have no eigenvalues equal to $1$ for $i\ne j$, and hence this difference is invertible. So $\left\{u_i\right\}$ is a clique of order $q^m-1$. (It is, however, not hard to exhibit maximal cliques containing $1$ that are not of this form.)

So much for cliques inside $\mathrm{GL}_m(q)$. In $\mathbb{P}\left(\mathrm{M}_m(q)\right)$, we include all points of the form $\left(u_i,1\right)$ together with $\left(1,0\right)$ and $\left(0,1\right)$. Hence the order of a maximal clique is $q^m+1$.\end{proof}

By the same argument as in the commutative case, the maximal order of a clique in a general finite ring $R$ with Jacobson radical $J$ is the minimum value of $q_i^{m_i}+1$ across the matrix-ring summands of $R/J$.

\begin{rem}The number of elements in an $m$-dimensional subspace, excluding the $0$ vector, is $q^m-1$. As the subspaces corresponding to distant points intersect only in the $0$ vector, a $\left(q^m+1\right)$-clique contains $q^{2m}-1$ elements, so, putting back the $0$ vector, we get the whole space. Hence the maximal clique corresponds to a spread, and the above theorem is equivalent to a (rather specialised) case of general theorems on the existence of spreads (e.g. \cite{GenSpreads}, Lemma 2).\end{rem}

\section{Coefficients of clique-extension counts}

In this section we shall prove a curious theorem on the coefficients of the higher-order terms for $k$-clique-extension counts up to $k=3$. We first introduce some definitions and lemmas (there does not appear to be standard snappy terminology for the concepts defined below).

\begin{defn}

Given some set $P_n$ of partitions of a number $n$, we say that the {\bf parity-count} of $P_n$, denoted $\mathrm{PC}\left(P_n\right)$, is the number of partitions in $P_n$ with an even number of parts minus the number of partitions in $P_n$ with an odd number of parts.

\end{defn}

\begin{defn} A {\bf distinct partition} is a partition into numbers no two of which are equal. A $\mathbf{2}${\bf-distinct partition} is a partition whose elements are split across two (possibly empty) subsets, such that each subset consists of distinct elements. An $\mathbf{\left(h,k,\star\right)}${\bf-distinct partition} is a $2$-distinct partition of $h$ such that the first subset contains exactly $k$ elements, while the second subset can contain any number of elements. We denote the set of $\left(h,k,\star\right)$-distinct partitions by $\mathcal{D}_2(h,k,\star)$. \end{defn}

To be clear, with $\left(h,k,\star\right)$-distinct partitions we have partitions whose Young diagrams have $h$ boxes divided into (say) red rows and white rows, such that no two red rows have equal length, no two white rows have equal length, and there are exactly $k$ red rows. For example, here are three $\mathcal{D}_2(h,k,\star)$, with $(h,k)=(4,1)$, $(4,2)$ and $(6, 2)$.

\ytableausetup{smalltableaux}
$4$, $1$

\begin{ytableau}
*(red)\;&*(red)\;&*(red)\;&*(red)\;
\end{ytableau}
\begin{ytableau}
*(red)\;&*(red)\;\\\;&\;
\end{ytableau}
\begin{ytableau}
\;&\;&\;\\*(red)\;
\end{ytableau}
\begin{ytableau}
*(red)\;&*(red)\;&*(red)\;\\\;
\end{ytableau}
\begin{ytableau}
\;&\;\\*(red)\;\\\;
\end{ytableau}

$4$, $2$

\begin{ytableau}
*(red)\;&*(red)\;&*(red)\;\\*(red)\;
\end{ytableau}
\begin{ytableau}
*(red)\;&*(red)\;\\*(red)\;\\\;
\end{ytableau}

$6$, $2$

\begin{ytableau}
*(red)\;&*(red)\;&*(red)\;&*(red)\;\\*(red)\;&*(red)\;
\end{ytableau}
\begin{ytableau}
*(red)\;&*(red)\;&*(red)\;&*(red)\;&*(red)\;\\*(red)\;
\end{ytableau}
\begin{ytableau}
\;&\;&\;\\*(red)\;&*(red)\;\\*(red)\;
\end{ytableau}
\begin{ytableau}
*(red)\;&*(red)\;&*(red)\;\\\;&\;\\*(red)\;
\end{ytableau}
\begin{ytableau}
*(red)\;&*(red)\;&*(red)\;\\*(red)\;&*(red)\;\\\;
\end{ytableau}
\begin{ytableau}
*(red)\;&*(red)\;&*(red)\;&*(red)\;\\*(red)\;\\\;
\end{ytableau}
\begin{ytableau}
*(red)\;&*(red)\;\\\;&\;\\*(red)\;\\\;
\end{ytableau}

We may observe that there are just as many partitions in the third row as in the first two together, and this follows from a general lemma.

\begin{lemma}\label{dist2P}

There is a bijection $f:\mathcal{D}_2(h,k,\star)\rightarrow\mathcal{D}_2(h-k,k,\star)\cup\mathcal{D}_2(h-k,k-1,\star)$ such that if $f(x)\in\mathcal{D}_2(h-k,k,\star)$ then $x$ and $f(x)$ have the same number of rows, while if $f(x)\in\mathcal{D}_2(h-k,k-1,\star)$ then $f(x)$ has one row less than $x$.

\end{lemma}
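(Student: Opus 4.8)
The plan is to exhibit $f$ as the simplest conceivable box-removal: strip the last box from every red row. First I would encode an element $x\in\mathcal{D}_2(h,k,\star)$ as a pair $(A,B)$, where $A$ is the set of red row-lengths and $B$ the set of white row-lengths, so that $|A|=k$ and $\sum A+\sum B=h$; recording the lengths as \emph{sets} loses no information precisely because distinctness within each colour is built into the definition, and the drawn Young diagram is just $A$ and $B$ merged and sorted into weakly decreasing order. The number of rows of $x$ is then $|A|+|B|=k+|B|$.

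Next I would define $f(A,B)=(A^{-},B)$, where $A^{-}=\{a-1:a\in A,\ a>1\}$, keeping $B$ untouched. Two observations make this well defined. Subtracting $1$ from the $k$ distinct red lengths keeps them distinct, since a strictly decreasing sequence stays strictly decreasing; and the red total drops by exactly $k$, whether or not a length-$1$ row is present, because a length-$1$ row contributes $1$ before removal and $0$ after. Hence $\sum A^{-}+\sum B=h-k$ in every case, which is what forces the image into the correct degree.

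I would then split on whether the smallest red row has length $1$, i.e.\ on whether $1\in A$. If $1\notin A$ every red length stays positive, so $|A^{-}|=k$ and the total row count is unchanged; thus $f(x)\in\mathcal{D}_2(h-k,k,\star)$ and $x$ and $f(x)$ have the same number of rows. If $1\in A$ the length-$1$ red row vanishes, so $|A^{-}|=k-1$ and $f$ drops exactly one row; thus $f(x)\in\mathcal{D}_2(h-k,k-1,\star)$ with one row fewer than $x$. This reproduces both row-counting clauses of the statement verbatim.

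Bijectivity I would establish by writing the inverse separately on the two (disjoint, since they carry different red-counts) pieces of the codomain: on $\mathcal{D}_2(h-k,k,\star)$ send $(A',B)\mapsto(\{a'+1:a'\in A'\},B)$, whose red set has all elements $\geq2$, recovering exactly the $1\notin A$ case; on $\mathcal{D}_2(h-k,k-1,\star)$ send $(A',B)\mapsto(\{a'+1:a'\in A'\}\cup\{1\},B)$, reinstating the length-$1$ row and recovering the $1\in A$ case. Since these partial inverses undo $f$ on the matching part of the domain and the codomain pieces do not overlap, $f$ is a bijection. The only point that genuinely needs care is the bookkeeping that the red-sum always decreases by precisely $k$ and that the presence or absence of the length-$1$ red row is exactly what selects which piece of the codomain we land in; once those two facts are pinned down the rest is immediate, so I expect that accounting to be the main (and only real) obstacle.
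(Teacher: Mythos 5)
Your proof is correct and follows exactly the same route as the paper's: remove one cell from each red row, observe that the sum drops by $k$ and that the presence or absence of a length-$1$ red row determines which piece of the codomain you land in, and invert by adding a cell back (reinstating a length-$1$ red row where needed). The paper states this more tersely, but the argument is the same; your explicit set-encoding and partial inverses just make the "easily reversed" claim fully precise.
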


\begin{proof}

From $x\in\mathcal{D}_2(h,k,\star)$, remove one cell from each of the $k$ red rows, giving a $2$-distinct partition of $h-k$. If there is a (necessarily unique) red row of length $1$ in $x$, then that row disappears and $f(x)\in\mathcal{D}_2(h-k,k-1,\star)$, and has one row less than $x$. If all red rows are longer than $1$, then $f(x)\in\mathcal{D}_2(h-k,k,\star)$ and has the same number of rows as $x$. This easily reversed operation is clearly a bijection.

\end{proof}

From the above, it follows not only that the count of partitions in $\mathcal{D}_2(h,k,\star)$ is the sum of the counts in $\mathcal{D}_2(h-k,k,\star)$ and $\mathcal{D}_2(h-k,k-1,\star)$, but also that the parity count of $\mathcal{D}_2(h,k,\star)$ is the difference of the parity counts: $\mathrm{PC}\left(\mathcal{D}_2(h,k,\star)\right)=\mathrm{PC}\left(\mathcal{D}_2(h-k,k,\star)\right)-\mathrm{PC}\left(\mathcal{D}_2(h-k,k-1,\star)\right)$, since subtracting one row from every partition in a set simply changes the sign of the parity count. (We do not exclude the empty partition from our counts.)

\begin{defn}An $\mathbf{m}${\bf -bounded partition} is a partition whose largest element is no larger than $m$, and which contains no more than $m$ elements. (That is, one whose Young diagram fits into an $m\times m$ grid.)\end{defn}

\begin{lemma}\label{distinctP}

The coefficient of $q^{m^2-h}$ in $\left(-1\right)^m q^\frac{m\left(m-1\right)}{2}\prod_{j=0}^{m-1}\left(1-q^{m-j}\right)$ is the parity-count of the $m$-bounded distinct partitions of $h$.

\end{lemma}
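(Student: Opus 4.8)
The plan is to expand the product explicitly, read off each coefficient as a parity-count directly from the definition, and then use a complementation symmetry to transport the reindexed exponent $q^{m^2-h}$ to the partitions of $h$ itself. Throughout I will write $\mathcal{D}(n)$ for the set of $m$-bounded distinct partitions of $n$; note that for distinct partitions the bound ``at most $m$ parts'' is automatic once all parts are $\le m$, so $\mathcal{D}(n)$ is exactly the set of distinct partitions of $n$ with every part $\le m$.

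First I would rewrite $\prod_{j=0}^{m-1}(1-q^{m-j})=\prod_{k=1}^{m}(1-q^{k})$ and expand it by choosing, in each factor, either the term $1$ or the term $-q^{k}$. A choice of the factors contributing $-q^{k}$ is the same as a choice of a subset $S\subseteq\{1,\dots,m\}$, and it contributes $(-1)^{|S|}q^{\sum_{k\in S}k}$. Such an $S$ is precisely a distinct partition of $n:=\sum_{k\in S}k$ with all parts $\le m$, i.e. an element of $\mathcal{D}(n)$, whose number of parts is $|S|$. Collecting terms by the value of $n$, the coefficient of $q^{n}$ in $\prod_{k=1}^{m}(1-q^{k})$ is $\sum(-1)^{(\#\mathrm{parts})}$ taken over $\mathcal{D}(n)$, which is exactly $\mathrm{PC}\!\left(\mathcal{D}(n)\right)$.

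Next I would account for the prefactor. The coefficient of $q^{m^2-h}$ in $(-1)^m q^{m(m-1)/2}\prod_{k=1}^{m}(1-q^{k})$ equals $(-1)^m$ times the coefficient of $q^{n}$ in the bare product, where $n=m^2-h-\tfrac{m(m-1)}2=\tfrac{m(m+1)}2-h$. So, up to the sign $(-1)^m$, the quantity we want is $\mathrm{PC}\!\left(\mathcal{D}(\tfrac{m(m+1)}2-h)\right)$, and it remains to relate this to $\mathrm{PC}\!\left(\mathcal{D}(h)\right)$.

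The key step is a complementation bijection. Within the universe $\{1,\dots,m\}$, sending a distinct partition (viewed as a subset) $S$ to its complement $\{1,\dots,m\}\setminus S$ is an involution carrying $\mathcal{D}(n)$ to $\mathcal{D}(\tfrac{m(m+1)}2-n)$, since $1+\cdots+m=\tfrac{m(m+1)}2$; it sends a partition with $r$ parts to one with $m-r$ parts. Hence each partition counted with sign $(-1)^r$ is matched to one counted with sign $(-1)^{m-r}=(-1)^m(-1)^r$, which gives $\mathrm{PC}\!\left(\mathcal{D}(\tfrac{m(m+1)}2-h)\right)=(-1)^m\,\mathrm{PC}\!\left(\mathcal{D}(h)\right)$. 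Combining this with the $(-1)^m$ from the prefactor, the two signs cancel and the coefficient of $q^{m^2-h}$ is exactly $\mathrm{PC}\!\left(\mathcal{D}(h)\right)$, as claimed. The expansion and the exponent bookkeeping are routine; the only point I expect to need genuine care is the sign tracking in the complementation step, namely verifying that the parity flip $r\mapsto m-r$ produces precisely the factor $(-1)^m$ that cancels the prefactor's sign.
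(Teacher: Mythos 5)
Your proof is correct and is essentially the argument the paper has in mind: the paper's proof is the one-line remark that the claim is ``clear from inspecting the way that each term is built when expanding the product,'' and your expansion of $\prod_{k=1}^m(1-q^k)$ over subsets $S\subseteq\{1,\dots,m\}$, together with the exponent bookkeeping and the complementation $S\mapsto\{1,\dots,m\}\setminus S$ cancelling the $(-1)^m$ prefactor, is exactly that inspection carried out in full. The only difference is that you supply the details the paper omits.
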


\begin{proof}

This is clear from inspecting the way that each term is built when expanding the product.

\end{proof}

We now have a generalisation of the above lemma.

\begin{lemma}\label{distCoeff}

Let $h\le m$. Then the coefficient of $q^{m^2-h}$ in $\left(-1\right)^m q^\frac{m(m-1)}{2}\prod_{j=0}^{m-1-k}\left(1-q^{m-j}\right)$ is equal to the parity-count of $\mathcal{D}_2\left(h,k,\star\right)$.

\end{lemma}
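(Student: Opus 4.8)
The plan is to show that both sides have the same ordinary generating function in an auxiliary variable $x$ tracking $h$, and then to compare coefficients. First I would expand the product directly: writing $\prod_{\ell=k+1}^{m}(1-q^{\ell})=\sum_{S\subseteq\{k+1,\dots,m\}}(-1)^{|S|}q^{\sum_{\ell\in S}\ell}$ and using the identity $m^2-\frac{m(m-1)}{2}=\binom{m+1}{2}$, the coefficient of $q^{m^2-h}$ in $(-1)^m q^{m(m-1)/2}\prod_{\ell=k+1}^m(1-q^\ell)$ becomes $(-1)^m$ times the signed count $\sum (-1)^{|S|}$ over subsets $S\subseteq\{k+1,\dots,m\}$ with $\sum_{\ell\in S}\ell=\binom{m+1}{2}-h$.

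The key simplifying move is to pass to complements inside $\{1,\dots,m\}$. Setting $T=\{1,\dots,m\}\setminus S$, the condition on $S$ translates into $T\supseteq\{1,\dots,k\}$ together with $\sum_{\ell\in T}\ell=h$, and $(-1)^{|S|}=(-1)^m(-1)^{|T|}$ absorbs the leading sign. Hence the coefficient in question equals $\sum_{T}(-1)^{|T|}$, summed over distinct partitions $T$ of $h$ whose parts are at most $m$ and which contain each of $1,2,\dots,k$. Because $h\le m$, every such $T$ automatically has all parts $\le h\le m$, so the bound ``parts $\le m$'' is vacuous and may be dropped. This is the one place the hypothesis $h\le m$ is used, and getting this bookkeeping right is the main thing to be careful about.

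I would then compute the generating function of this signed count. Forcing the parts $1,\dots,k$ to appear contributes $(-1)^k x^{\binom{k+1}{2}}$, while each larger part $\ell$ is optional and contributes $(1-x^\ell)$, so $\sum_h(\text{coeff})\,x^h=(-1)^k x^{\binom{k+1}{2}}\prod_{\ell>k}(1-x^\ell)$. On the other side, an $(h,k,\star)$-distinct partition is precisely a pair consisting of a distinct partition $\alpha$ with exactly $k$ parts (the red rows) and an arbitrary distinct partition $\beta$ (the white rows), with $|\alpha|+|\beta|=h$ and total row count $k+\ell(\beta)$, where $\ell(\beta)$ denotes the number of parts. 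Since the parity-count is $\sum(-1)^{\text{rows}}$, the sum $\sum_h \mathrm{PC}(\mathcal{D}_2(h,k,\star))\,x^h$ factors as $(-1)^k\bigl(\sum_{\alpha}x^{|\alpha|}\bigr)\bigl(\sum_{\beta}(-1)^{\ell(\beta)}x^{|\beta|}\bigr)$, where the first factor is $x^{\binom{k+1}{2}}/\prod_{i=1}^k(1-x^i)$ and the second is $\prod_{j\ge1}(1-x^j)$. Cancelling the factors $\prod_{i=1}^k(1-x^i)$ leaves exactly $(-1)^k x^{\binom{k+1}{2}}\prod_{\ell>k}(1-x^\ell)$, matching the polynomial side; comparing coefficients of $x^h$ for $h\le m$ finishes the argument.

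As an alternative I would note that these polynomial coefficients satisfy, in $h$ and $k$, precisely the recursion that the parity counts inherit from Lemma \ref{dist2P} (it comes from the factor $(1-q^k)$ distinguishing the $k$- and $(k-1)$-fold products). An induction on $h$, with base cases supplied by Lemma \ref{distinctP} (the case $k=0$), would then also establish the result and would tie it more directly to the combinatorial lemma already proved; but the generating-function route is cleaner and self-contained, so I would present that as the primary proof.
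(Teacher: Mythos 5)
Your proof is correct, and it takes a genuinely different route from the paper's. You expand $\prod_{\ell=k+1}^{m}(1-q^{\ell})$ as a signed sum over subsets, pass to complements in $\{1,\dots,m\}$ to turn the exponent condition into $\sum_{\ell\in T}\ell=h$ with $\{1,\dots,k\}\subseteq T$, observe that $h\le m$ makes the bound on part sizes vacuous, and then match generating functions: $(-1)^k x^{\binom{k+1}{2}}\prod_{\ell>k}(1-x^\ell)$ on the polynomial side against $(-1)^k\bigl(x^{\binom{k+1}{2}}/\prod_{i=1}^k(1-x^i)\bigr)\prod_{j\ge1}(1-x^j)$ for the parity counts, via the staircase bijection for $k$-part distinct partitions. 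All of these steps check out (including the sign bookkeeping $(-1)^{|S|}=(-1)^m(-1)^{|T|}$ and the identification of $\mathcal{D}_2(h,k,\star)$ with pairs of distinct partitions), and you correctly isolate the single place where $h\le m$ is needed. The paper instead proceeds by a double induction: it defines a partial generating property (P), derives the recurrence $a^{(k-1)}_{m^2-(h-k)}=a^{(k)}_{m^2-(h-k)}-a^{(k)}_{m^2-h}$ from the factor $(1-q^k)$, matches it against the recurrence for parity counts from Lemma \ref{dist2P}, anchors the induction at $P_m(q)=(-1)^mq^{m(m-1)/2}$ and at the $k=0$ case (Lemma \ref{distinctP}), and then bootstraps the range of validity back up from $k=0$ to $k=m$ --- essentially the alternative you sketch in your final paragraph. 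Your generating-function argument is more self-contained and arguably cleaner, giving an exact closed form for the coefficient rather than only the range $h\le m$ needed here; the paper's recursive argument has the advantage of reusing Lemma \ref{dist2P} directly and of exposing the mechanism by which the $(1-q^k)$ factors mirror the row-deletion bijection.
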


\begin{proof}

Let $P_k(q)=\sum_{i=0}^{m^2}a^{(k)}_i q^i$, for some $k$ with $0\le k\le m$ have the following property (P):

 \begin{equation}\label{PC}
\tag{P}a^{(k)}_{m^2-h}=\mathrm{PC}\left(\mathcal{D}_2\left(h,k,\star\right)\right)\text{ for }h\le C+\frac{k(k+1)}{2}
\end{equation}

\noindent for some constant $C$. That is, $P_k(q)$ is a sort of partial generating function for these parity counts.

\noindent We have, from the remarks following Lemma \ref{dist2P}, that

\[\mathrm{PC}\left(\mathcal{D}_2\left(h,k,\star\right)\right)=\mathrm{PC}\left(\mathcal{D}_2\left(h-k,k,\star\right)\right)-\mathrm{PC}\left(\mathcal{D}_2\left(h-k,k-1,\star\right)\right)\]

\noindent and hence

\[\mathrm{PC}\left(\mathcal{D}_2\left(h-k,k-1,\star\right)\right)=\mathrm{PC}\left(\mathcal{D}_2\left(h-k,k,\star\right)\right)-\mathrm{PC}\left(\mathcal{D}_2\left(h,k,\star\right)\right)\]

\noindent Now suppose that $P_k(q)$ has property (\ref{PC}) and let $P_{k-1}(q)=P_k(q)\left(1-q^k\right)$. This implies $a^{(k-1)}_{m^2-(h-k)}=a^{(k)}_{m^2-(h-k)}-a^{(k)}_{m^2-h}$ for $h\le C+\frac{k(k+1)}{2}$, i.e. for $h-k\le C+\frac{(k-1)k}{2}$. By the above recurrence, therefore, $P_{k-1}(q)$ also has property (\ref{PC}) with the same constant $C$. Now let $P_m(q)=\left(-1\right)^m q^\frac{m(m-1)}{2}$. This has property (\ref{PC}) for at least $C=0$, as the number of $\left(h,m,\star\right)$-partitions is $0$ for $h<\frac{m(m+1)}{2}$ and $1$ for $h=\frac{m(m+1)}{2}$, and hence $\mathrm{PC}\left(\mathcal{D}_2\left(h,m,\star\right)\right)=0$ for  $h<\frac{m(m+1)}{2}$ and $\mathrm{PC}\left(\mathcal{D}_2\left(h,m,\star\right)\right)=\left(-1\right)^m$ for $h=\frac{m(m+1)}{2}$. Hence we know that property (\ref{PC}) holds with $C=0$ for all $P_k(q)$ down to $k=0$. However, we also know from Lemma \ref{distinctP} that for $k=0$, property (\ref{PC}) holds with $C=m$. We know that for $k=1$, property (\ref{PC}) holds with $C=0$; that is, we know the coefficients for $h=0,1$. Then we can use the recurrence relation $a^{(k-1)}_{m^2-(h-k)}=a^{(k)}_{m^2-(h-k)}-a^{(k)}_{m^2-h}$ with $k=1$ to extend this to lower-order coefficients until we run out of known coefficients in $P_0(q)$ at $h-k=m$, i.e. $h=m+1$. So property (\ref{PC}) holds with $C=m$ for $k=1$ also. We now repeat this with $P_2(q)$. We know the coefficients for $h\le3$, so we can use the recurrence to extend to lower orders (by jumps of $k$, i.e. $2$) until we run out of known coefficients in $P_1(q)$, at $h-k=m+1$, i.e. $h = m+1+2$.  So property (\ref{PC}) holds with $C=m$ for $k=2$ also. In this way we can continue extending until $k=m$, at each stage calculating enough coefficients to make property (\ref{PC}) hold with $C=m$. This is more than enough to establish the lemma.

\end{proof}

\begin{thm}\label{coeffs}

Let $C_{m,k}(q)$ be the (polynomial giving the) number of ways to extend a $k$-clique to a $k+1$-clique in $\mathbb{P}\left(M_m(q)\right)$. Let $k\le 3$ and $h\le m$. Then the coefficient of $q^{m^2-h}$ in $C_{m,k}(q)$ is equal to the coefficient of $q^h$ in $\prod_{i=1}^\infty\left(1-q^i\right)^{k-1}$.

\end{thm}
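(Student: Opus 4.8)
The plan is to handle the three admissible values $k=1,2,3$ in turn, in each case recognising $C_{m,k}(q)$ as a polynomial whose coefficients have already been expressed as parity-counts in Lemmas \ref{distinctP} and \ref{distCoeff}, and then matching those parity-counts against $\prod_{i=1}^\infty(1-q^i)^{k-1}$. Writing $[q^h]F$ for the coefficient of $q^h$ in a series $F$, the combinatorial bridge is the identity $\prod_{i=1}^\infty(1-q^i)=\sum_\lambda(-1)^{\ell(\lambda)}q^{|\lambda|}$, where $\lambda$ runs over partitions into distinct parts and $\ell(\lambda)$ is the number of parts. Reading off coefficients, $[q^h]\prod_{i=1}^\infty(1-q^i)$ is exactly the parity-count of the distinct partitions of $h$; squaring, $[q^h]\prod_{i=1}^\infty(1-q^i)^2$ is the parity-count over ordered pairs of distinct partitions summing to $h$, which is precisely the parity-count of all $2$-distinct partitions of $h$ (red rows $=$ first partition, white rows $=$ second, total parts $=\ell(\lambda)+\ell(\mu)$). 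For $k=1$ the exponent is $0$ and the coefficient is simply $\delta_{h,0}$.

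For $k=1$, the number of ways to extend a $1$-clique is the degree of the graph, $C_{m,1}(q)=q^{m^2}$, whose $q^{m^2-h}$-coefficient is $\delta_{h,0}$, matching $[q^h]\,1$. For $k=2$, the extension count is $C_{m,2}(q)=\prod_{k=0}^{m-1}(q^m-q^k)$; pulling $q^k$ out of each factor and collecting the $m$ signs gives the algebraic identity $C_{m,2}(q)=(-1)^m q^{m(m-1)/2}\prod_{j=0}^{m-1}(1-q^{m-j})$, which is exactly the polynomial treated in Lemma \ref{distinctP}. Hence $[q^{m^2-h}]C_{m,2}(q)$ is the parity-count of the $m$-bounded distinct partitions of $h$. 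When $h\le m$ every distinct partition of $h$ is automatically $m$-bounded, since both its largest part and its number of parts are at most $h$, so this parity-count equals that of all distinct partitions of $h$, i.e.\ $[q^h]\prod_{i=1}^\infty(1-q^i)$, as required.

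For $k=3$, I would start from the $4$-clique-extension formula $C_{m,3}(q)=(-1)^m q^{m(m-1)/2}\sum_{i=0}^m\prod_{j=0}^{m-i-1}(1-q^{m-j})$ and observe that its $i$th summand is exactly the polynomial $(-1)^m q^{m(m-1)/2}\prod_{j=0}^{m-1-k}(1-q^{m-j})$ of Lemma \ref{distCoeff} with $k=i$. Applying that lemma termwise, the $q^{m^2-h}$-coefficient of the $i$th summand is $\mathrm{PC}\left(\mathcal{D}_2(h,i,\star)\right)$ for $h\le m$; since the parity-count is additive over the decomposition of the $2$-distinct partitions of $h$ by their number of red rows, and since summands with $i(i+1)/2>h$ contribute nothing, the sum over $0\le i\le m$ collects the parity-count of all $2$-distinct partitions of $h$. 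By the first paragraph this equals $[q^h]\prod_{i=1}^\infty(1-q^i)^2$, completing the case.

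The routine parts are the two generating-function expansions and the algebraic rewriting of $C_{m,2}$ and $C_{m,3}$. The step needing care is the bookkeeping for $k=3$: one must check that the summation index $i$ matches the red-row count $k$ of Lemma \ref{distCoeff} under the shift $m-1-k=m-i-1$, confirm that the restriction $h\le m$ makes every boundedness hypothesis vacuous and makes the range $0\le i\le m$ exhaust all admissible red-row counts, and invoke additivity of the parity-count across the blocks $\mathcal{D}_2(h,i,\star)$. It is worth noting that the common exponent $k-1$ reflects the fact that $\prod_{i=1}^\infty(1-q^i)^{k-1}$ enumerates $(k-1)$-tuples of distinct partitions, which is why a clean statement is available only up to $k=3$, the largest clique size for which a closed extension count was obtained.
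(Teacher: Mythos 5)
Your treatment of $k=1,2,3$ is essentially the paper's own argument: the same algebraic rewriting of $\prod_{k=0}^{m-1}(q^m-q^k)$ into the form covered by Lemma \ref{distinctP}, the same termwise application of Lemma \ref{distCoeff} to the sum defining $C_{m,3}$, the same observation that $h\le m$ makes every $m$-boundedness condition vacuous, and the same identification of $[q^h]\prod_{i\ge1}(1-q^i)^{j}$ for $j=1,2$ with parity-counts of distinct and $2$-distinct partitions. Those parts are correct and well organised.

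The gap is that you have omitted the case $k=0$, which is part of the statement: the hypothesis is only $k\le 3$, and the exponent $k-1=-1$ is deliberately chosen so that $\prod_{i\ge1}(1-q^i)^{-1}$ is the partition generating function. Here $C_{m,0}(q)$ is the number of vertices of $\mathbb{P}\left(\mathrm{M}_m(q)\right)$, namely the $q$-binomial coefficient $\left\lbrack\begin{smallmatrix}2m\\m\end{smallmatrix}\right\rbrack_q$, and the claim is that its coefficient of $q^{m^2-h}$ equals the number of partitions of $h$ for $h\le m$. This case does not follow from the parity-count lemmas you cite; the paper handles it separately by recalling that the coefficient of $q^h$ in $\left\lbrack\begin{smallmatrix}2m\\m\end{smallmatrix}\right\rbrack_q$ counts $m$-bounded partitions of $h$, invoking the palindromic symmetry of the $q$-binomial coefficient to convert the coefficient of $q^h$ into that of $q^{m^2-h}$, and then using $h\le m$ to drop the boundedness restriction. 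You need to add this argument (in particular the symmetry step, which has no analogue in your three cases) for the proof to cover the stated range of $k$.
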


\begin{proof}

For $k=0$, $C_{m,k}(q)=\left\lbrack\begin{array}{c}2m\\m\end{array}\right\rbrack_q$. But the coefficient of $q^h$ in this is well-known to be the number of $m$-bounded partitions of $h$, and, as the $q$-binomial coefficient is symmetrical under $k\rightarrow m-k$, this is the same as the coefficient of $q^{m^2-h}$. Also, $\prod_{i=1}^\infty\left(1-q^i\right)^{-1}$ is well-known to be the generating function of the number of partitions, as is clear by expanding this expression, which coincides with the number of $m$-bounded partitions when $h\le m$.

For $k=1$, $C_{m,k}(q)=q^{m^2}$ and $\prod_{i=1}^\infty\left(1-q^i\right)^0=1$.

For $k=2$, $C_{m,k})q)=\left(-1\right)^m q^\frac{m\left(m-1\right)}{2}\prod_{j=0}^{m-1}\left(1-q^{m-j}\right)$, and as we have already established, the coefficient of $q^{m^2-h}$ in here is $\mathrm{PC}\left(\mathcal{D}_2\left(h,k,\star\right)\right)$ if $h\le m$. By essentially the same reasoning, this coincides with the coefficient of $q^h$ in $\prod_{i=1}^\infty\left(1-q^i\right)^1$.

For $k=3$, $C_{m,k}(q)=\left(-1\right)^mq^\frac{m(m-1)}{2}\displaystyle\sum_{i=0}^m\displaystyle\prod_{j=0}^{m-i-1}\left(1-q^{m-j}\right)$. By Lemma \ref{distCoeff}, this is the sum of $\mathrm{PC}\left(\mathcal{D}_2\left(h,k,\star\right)\right)$ for all $h\le m$, hence equal to the parity-count of all $2$-distinct partions of $h$ such that both subsets are $m$-bounded. For $h\le m$, the $m$-bounding is automatic, and it is not hard to see that $\prod_{i=1}^\infty\left(1-q^i\right)^2$ is the generating function for parity-counts of all $2$-distinct partitions.

\end{proof}

\begin{rem}The coefficients of $\prod_{i=1}^\infty\left(1-q^i\right)^{k-1}$ for $k=0,1,2,3$ are given by OEIS sequences A000041, A000007, A010815 and A002107 (\cite{OEIS}).

\end{rem}

\begin{corollary}

The leading term of $\cap k N$ for $\mathbb{P}\left(M_m(q)\right)$ is of degree $m^2-k$ for $1\le k\le 3$ and has coefficient $1$.

\end{corollary}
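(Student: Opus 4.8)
The plan is to reduce the claim to the generating-function content of Proposition~\ref{coeffs} and then collapse a binomial sum. Writing $[q^h]F$ for the coefficient of $q^h$ in a power series $F$, and $C_{m,j}(q)$ for the number of $(j+1)$-cliques on a fixed $j$-clique (as in Proposition~\ref{coeffs}), the first step is to record the inclusion--exclusion expansion
\[
\cap k N=\sum_{j=0}^{k}(-1)^{j}\binom{k}{j}\,C_{m,j}(q),
\]
which is proved exactly as the commutative $\cap nN$ formula earlier: the intersection of the $k$ neighbourhoods is the complement of the union of the $k$ distant-sets, and the $j$-fold intersections of those distant-sets each contain $C_{m,j}(q)$ points, with $C_{m,0}(q)=\left\lbrack\begin{array}{c}2m\\m\end{array}\right\rbrack_q$ the total point count. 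Each of $C_{m,0},\dots,C_{m,3}$ has degree exactly $m^{2}$ with leading coefficient $1$ (one checks this directly from the four explicit expressions), so the combination has degree at most $m^{2}$, and it remains only to find the largest $h$ for which the coefficient of $q^{m^{2}-h}$ survives.

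Next I would pass to coefficients. Set $P(q)=\prod_{i=1}^{\infty}(1-q^{i})$. Proposition~\ref{coeffs} says that, for $j\le 3$ and $h\le m$, the coefficient of $q^{m^{2}-h}$ in $C_{m,j}(q)$ equals $[q^{h}]\,P(q)^{\,j-1}$ (the case $j=0$ using $P^{-1}$, the partition generating function). Hence, under the standing assumption $m\ge k$ -- which is just the hypothesis $h\le m$ of Proposition~\ref{coeffs} read at $h=k$ -- for every $h\le k$ we obtain
\begin{align*}
[q^{m^{2}-h}]\,\cap k N
&=\sum_{j=0}^{k}(-1)^{j}\binom{k}{j}[q^{h}]\,P(q)^{\,j-1}\\
&=[q^{h}]\Bigl(P(q)^{-1}\sum_{j=0}^{k}\binom{k}{j}\bigl(-P(q)\bigr)^{j}\Bigr)
=[q^{h}]\,\frac{\bigl(1-P(q)\bigr)^{k}}{P(q)}.
\end{align*}
The single manipulation doing the work here is the elementary identity $\sum_{j}\binom{k}{j}(-P)^{j}=(1-P)^{k}$.

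Finally I would read off the order of vanishing of $(1-P)^{k}/P$ at $q=0$. By Euler's pentagonal number theorem $P(q)=1-q-q^{2}+q^{5}+\cdots$, so $1-P(q)=q+q^{2}-q^{5}-\cdots$ has lowest term $q$ with coefficient $1$; therefore $(1-P(q))^{k}$ has lowest term $q^{k}$ with coefficient $1$, and multiplying by $P(q)^{-1}=1+q+2q^{2}+\cdots$ (constant term $1$) leaves this lowest term intact. Thus $[q^{h}]\bigl((1-P)^{k}/P\bigr)=0$ for $h<k$ and $=1$ for $h=k$. Translating back through the previous display, the coefficient of $q^{m^{2}-h}$ in $\cap k N$ vanishes for $0\le h<k$ and equals $1$ at $h=k$; since the whole combination has degree at most $m^{2}$, its leading term is indeed $q^{m^{2}-k}$ with coefficient $1$, for $1\le k\le 3$.

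I expect the only real obstacle to be bookkeeping rather than ideas. The substantive input -- that the top coefficients of $C_{m,j}$ are governed by the powers $P(q)^{\,j-1}$ -- is exactly Proposition~\ref{coeffs}, so once it is granted the remainder is the short generating-function collapse above. The points needing care are keeping the inclusion--exclusion signs straight, never invoking Proposition~\ref{coeffs} outside its valid range $h\le m$ (which is precisely what forces the harmless hypothesis $m\ge k$), and noting the degenerate small-$m$ behaviour: for instance $\cap2N=0$ when $m=1$, where the nominal degree $m^{2}-k$ is negative, so the statement should be read as holding in the stable range $m\ge k$.
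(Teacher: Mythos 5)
Your proposal is correct and follows essentially the same route as the paper: the same inclusion--exclusion expansion of $\cap kN$ in terms of the $C_{m,j}$, the same appeal to Proposition~\ref{coeffs}, and the same binomial collapse to $\left(1-\prod_{i=1}^\infty(1-q^i)\right)^k\prod_{i=1}^\infty(1-q^i)^{-1}$, whose lowest-order term $q^k$ gives the result. The only (minor) divergence is at the end: the paper explicitly checks the degenerate cases $m<k$, whereas you simply restrict attention to the stable range $m\ge k$.
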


\begin{proof}

\[\cap k N=\sum_{i=0}^k\left(-1\right)^i\left(\begin{array}{c}k\\i\end{array}\right)C_{m,k}(q)\]

For sufficiently large $m$, the leading coefficients of $C_{m,k}(q)$ correspond to the first few coefficients of $\prod_{i=1}^\infty\left(1-q^i\right)^{k-1}$, so the leading coefficients of $\cap k N$ correspond to the first few coefficients of $\sum_{j=0}^k\left(-1\right)^j\left(\begin{array}{c}k\\j\end{array}\right)\prod_{i=1}^\infty\left(1-q^i\right)^{j-1}=\prod_{i=1}^\infty\left(1-q^i\right)^{-1}\left(1-\prod_{i=1}^\infty\left(1-q^i\right)\right)^k$. Since the lowest-degree non-zero term of $1-\prod_{i=1}^\infty\left(1-q^i\right)$ is $q$, the lowest degree of its powers successively increases by $1$, and hence the degree of the leading term of $C_{m,k}(q)$ decreases with $k$ in the same manner.

More explicitly, the coefficients of $C_{m,k}(q)$ start off like this:

\begin{center}
$\begin{array}{|c|c|c|c|c|c|}
\hline
&\mathbf{m^2}&\mathbf{m^2-1}&\mathbf{m^2-2}&\mathbf{m^2-3}&\mathbf{m^2-4}\\\hline
C_{m,0}&1&1&2&3&5\\\hline
C_{m,1}&1&0&0&0&0\\\hline
C_{m,2}&1&-1&-1&0&0\\\hline
C_{m,3}&1&-2&-1&2&1\\\hline
\end{array}$
\end{center}

Hence the coefficients of $\cap k N$ start off like this:

\begin{center}
$\begin{array}{|c|c|c|c|c|c|}
\hline
&\mathbf{m^2}&\mathbf{m^2-1}&\mathbf{m^2-2}&\mathbf{m^2-3}&\mathbf{m^2-4}\\\hline
\cap1N&0&1&2&3&5\\\hline
\cap2N&0&0&1&3&5\\\hline
\cap3N&0&0&0&1&4\\\hline
\end{array}$
\end{center}

It only remains to check the cases where $m<k$. We have already done this for $k=1$ and $k=2$ in the previous section. It is also easy to check for $k=3$. Indeed, $C_{0,3}=1$, $C_{1,3}=q-2$ and $C_{2,3}=q^4-2q^3-q^2+3q$, so where the coefficients are present, they have the needed values.

\end{proof}

The proof of Proposition \ref{coeffs} gives the impression that the theorem really consists of four unrelated facts, one for each value of $k$. It seems unlikely that this impression is accurate. Here is a sketch of how this pattern arises, showing how the $k=1$ case should imply the $k=2$ case, and the $k=2$ case should imply the $k=3$ case. In fact, the same mechanism should apply to any clique consisting of $\left(1,0\right)$ together with elements of the form $\left(\lambda I,1\right)$ for $\lambda\in\mathbb{F}_q$.

A function such that, for $h\le m$, its coefficient for the term of order $q^{m^2-h}$ is the same as the coefficient of $q^h$ in $\prod_{i=1}^\infty\left(1-q^i\right)^{k-1}$ is $q^{m^2}\prod_{i=1}^\infty\left(1-q^{-i}\right)^{k-1}$. Suppose $C_{m,k}$ is sufficiently approximated by this form. Now suppose we are moving up to $k+1$, and suppose we are choosing among matrices that are candidates for extending a $k-1$-clique to a $k$-clique, but are not necessarily candidates for extending to a $k+1$-clique---on account of capturing a non-trivial subspace. Then let us suppose that a matrix that captures the subspace where all coordinates after the $i$th are $0$ is of the form

\[\left(\begin{array}{c|c}A&*\\\hline B&G\end{array}\right)\]

\noindent where $A$ and $B$ are fixed, of sizes $i\times i$ and $\left(m-i\right)\times i$ respectively, $*$ can be anything, of size $i\times\left(m-i\right)$, and $G$ is one of the $C_{m-i,k}$ matrices of size $\left(m-i\right)\times\left(m-i\right)$ that would be candidates for a clique-extension in a space of dimension $m-i$. Now, by hypothesis, $C_{m-i,k}$ is approximated by $q^{\left(m-i\right)^2}\prod_{j=0}^\infty\left(1-q^{-j}\right)^{k-1}$, while the number of possible submatrices $*$ is $q^{i\left(m-i\right)}$, so from Lemma \ref{incexc}, the count of extensions is (approximately)

\[\begin{array}{ll}&\displaystyle\sum_{i=0}^m\left(-1\right)^i\left\lbrack\begin{array}{c}m\\k\end{array}\right\rbrack_q q^\frac{i(i-1)}{2}q^{i(m-i)}q^{\left(m-i\right)^2}\prod_{j=1}^\infty\left(1-q^{-j}\right)^{k-1}\\
=&\displaystyle\sum_{i=0}^m\left(-1\right)^i\left\lbrack\begin{array}{c}m\\k\end{array}\right\rbrack_q q^\frac{i(i-1)}{2}q^{m^2}q^{-m i}\prod_{j=1}^\infty\left(1-q^{-j}\right)^{k-1}\\
=&\displaystyle q^{m^2}\sum_{i=0}^m\left(-q^{-m}\right)^i\left\lbrack\begin{array}{c}m\\k\end{array}\right\rbrack_q q^\frac{i(i-1)}{2}\prod_{j=1}^\infty\left(1-q^{-j}\right)^{k-1}\\
=&\displaystyle q^{m^2}\prod_{i=0}^{m-1}\left(1-q^{-m}q^i\right)\prod_{j=1}^\infty\left(1-q^{-j}\right)^{k-1}\\
=&\displaystyle q^{m^2}\prod_{i=1}^{m}\left(1-q^{-i}\right)\prod_{j=1}^\infty\left(1-q^{-j}\right)^{k-1}
\end{array}
\]

\noindent (where the third step comes from the $q$-binomial theorem).

However,
\[\begin{array}{ll}
&\displaystyle q^{m^2}\prod_{i=1}^{m}\left(1-q^{-i}\right)\prod_{j=1}^\infty\left(1-q^{-j}\right)^{k-1}\\
\approx&\displaystyle q^{m^2}\prod_{i=1}^\infty\left(1-q^{-i}\right)\prod_{j=1}^\infty\left(1-q^{-j}\right)^{k-1}\\
=&\displaystyle \prod_{j=1}^\infty\left(1-q^{-j}\right)^k
\end{array}\]

\section*{Acknowledgements}

I would like to thank John Baez for many helpful comments that improved this paper in several ways.

\begin{appendices}

\section{A combinatorial identity} \label{lacun}

Here we prove the remark that we have $p\vert\cap n\mathrm{N}$ for every prime $p\le n$, where

\[\cap n\mathrm{N}=\left\vert J\right\vert\displaystyle\sum_{k=0}^n\left(-1\right)^k\left(\begin{array}{ccc}n\\k\end{array}\right)\displaystyle\prod_i\left(q_i+1-k\right)\]

Observe that the sum is of the form $\sum_k\left(-1\right)^k\left(\begin{array}{ccc}n\\k\end{array}\right)f(k)$, where $f(k)$ is a function whose value mod $p$ depends only on $k$ mod $p$ for every $p$, so that the sum separates into terms of the form $\sum_j\left(-1\right)^{j p+m}\left(\begin{array}{ccc}n\\j p+m\end{array}\right)f(m)$ (where $j$ ranges over all values such that $0\le j p+m\le n$). But we have the following theorem.

\begin{thm}
$\displaystyle\sum_{\substack{j\\0\le j p+m\le n}}\left(-1\right)^{j p+m}\left(\begin{array}{ccc}n\\j p+m\end{array}\right)=0$ mod $p$ for prime $p\le n$
\end{thm}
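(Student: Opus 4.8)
The plan is to package the sum as the extraction of an arithmetic progression of coefficients from the generating function $(1-x)^n$, and then to exploit the collapse of $x^p-1$ into $(x-1)^p$ that occurs in characteristic $p$.

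First I would fix the residue $m$ with $0\le m<p$ and recognise the quantity in question as the sum of the coefficients of $(1-x)^n=\sum_{k=0}^n(-1)^k\binom nk x^k$ taken over those exponents $k$ lying in a single residue class modulo $p$:
\[S_m:=\sum_{\substack{j\\0\le jp+m\le n}}\left(-1\right)^{jp+m}\binom{n}{jp+m}=\sum_{\substack{k\\k\equiv m\ (\mathrm{mod}\ p)}}\left(-1\right)^k\binom nk.\]
The key observation is that summing coefficients over a residue class is exactly reduction modulo $x^p-1$. Since $x^p\equiv 1$, reducing the polynomial $(1-x)^n\in\mathbb{Z}[x]$ modulo $x^p-1$ replaces each $x^k$ by $x^{k\bmod p}$, and so yields $\sum_{m=0}^{p-1}S_m\,x^m$, with the integer $S_m$ appearing precisely as the coefficient of $x^m$. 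Consequently it suffices to prove that $(1-x)^n\equiv 0\pmod{x^p-1}$ in $\mathbb{F}_p[x]$, for then the reduction modulo $p$ of each coefficient $S_m$ vanishes, which is exactly the assertion to be shown.

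Finally I would invoke the freshman's dream in $\mathbb{F}_p[x]$: there $x^p-1=(x-1)^p$, while $(1-x)^n=(-1)^n(x-1)^n$. Because the hypothesis supplies $p\le n$, the factor $(x-1)^p$ divides $(x-1)^n$, so $(1-x)^n$ is divisible by $x^p-1$ in $\mathbb{F}_p[x]$, as required. The only genuine idea is the twofold identification—of residue-class coefficient sums with reduction modulo $x^p-1$, and of $x^p-1$ with $(x-1)^p$ in characteristic $p$; once both are in hand the divisibility is immediate and no separate treatment of the individual residues $m$ is needed, which is the chief appeal of this route over a direct roots-of-unity or Lucas-theorem computation (the latter being the step I would otherwise expect to be the main obstacle).
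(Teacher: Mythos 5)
Your proof is correct, but it takes a genuinely different route from the paper. The paper extracts the residue-class sum with a roots-of-unity filter: for $\phi$ a primitive $p$th root of unity it writes the sum as $\frac1p\sum_{j=0}^{p-1}\left(1-\phi^j\right)^n\phi^{-jm}$ and then argues that for $p\le n$ each term contains a factor $\left(1-\phi^j\right)^p\equiv1-\phi^{jp}=0\pmod p$. Your version replaces the filter by reduction of the generating polynomial $\left(1-x\right)^n$ modulo the monic polynomial $x^p-1$ over $\mathbb{Z}$, which produces all the sums $S_m$ at once as the coefficients of the remainder, and then kills them simultaneously via $x^p-1=\left(x-1\right)^p\mid\left(x-1\right)^n$ in $\mathbb{F}_p[x]$. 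The two arguments hinge on the same characteristic-$p$ identity, but yours stays in $\mathbb{F}_p[x]$ rather than passing to the cyclotomic integers, and—more substantively—it sidesteps the prefactor $\frac1p$ in the filter formula. In the paper's argument, showing each term of the sum is divisible by $p$ only shows the sum is divisible by $p$, and after dividing by $p$ one must still conclude divisibility by $p$; making that rigorous requires the total ramification of $p$ in $\mathbb{Z}[\phi]$ (each nonzero term is actually divisible by $p\left(1-\phi\right)$, and a rational integer divisible by $1-\phi$ is divisible by $p$). Your polynomial-quotient route needs only that division by the monic $x^p-1$ commutes with reduction mod $p$, so it is both more elementary and tighter than the paper's as written.
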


\begin{proof}
Let $\phi$ be a primitive $p$th root of unity.

\begin{center}
$\begin{array}{lll}
&\displaystyle\frac{1}{p}\displaystyle\sum_{j=0}^{p-1}{\left(1-\phi^j\right)^n\phi^{-j m}}\\
=&\displaystyle\frac{1}{p}\displaystyle\sum_{j=0}^{p-1}{\displaystyle\sum_{k=0}^n{\left(-1\right)^k\left(\begin{array}{c}n\\k\end{array}\right)\phi^{j k}\phi^{-j m}}}\\
=&\displaystyle\sum_{k=0}^n{\left(-1\right)^k\left(\begin{array}{c}n\\k\end{array}\right)\left\lbrace\displaystyle\frac{1}{p}\displaystyle\sum_{j=0}^{p-1}{\phi^{j\left(k-m\right)}}\right\rbrace}
\end{array}$
\end{center}

The term in braces is $0$ unless $\left(k-m\right)$ is a multiple of $p$, in which case it is $1$, so the whole expression is equal to the alternating lacunary sum above.

But on the other hand, if $p\le n$, then every term in the first sum contains a factor of the form

\[\left(1-\phi^j\right)^p\]

But, mod $p$, this is equal to $1-\phi^{j p}=0$.\end{proof}

\section{Inequivalent cliques} \label{extensions}

Here is an example of why we can not expect to have a general formula for the number of ways to extend a $k$-clique for $k>3$. Suppose $m=2$ and $q=3$. Let us work in $\mathrm{GL}_2(3)$, where (to abuse the definition a little) we will say that two elements are distant if their difference is invertible. Suppose we have already picked the following $3$-clique:

\[\left\{\left(\begin{array}{cc}1&0\\0&1\end{array}\right),\left(\begin{array}{cc}2&0\\0&2\end{array}\right),\left(\begin{array}{cc}0&2\\1&0\end{array}\right)\right\}\]

There are $9$ elements which will extend this to a $4$-clique, and they fall into $3$ classes:

\[\mathbf{A}=\left\{\left(\begin{array}{cc}2&2\\2&1\end{array}\right),\left(\begin{array}{cc}2&1\\1&1\end{array}\right),\left(\begin{array}{cc}1&2\\2&2\end{array}\right),\left(\begin{array}{cc}1&1\\1&2\end{array}\right)\right\}\]

\[\mathbf{B}=\left\{\left(\begin{array}{cc}2&2\\1&2\end{array}\right),\left(\begin{array}{cc}2&1\\2&2\end{array}\right),\left(\begin{array}{cc}1&2\\1&1\end{array}\right),\left(\begin{array}{cc}1&1\\2&1\end{array}\right)\right\}\]

\[\mathbf{C}=\left\{\left(\begin{array}{cc}0&1\\2&0\end{array}\right)\right\}\]

All the elements of $\mathbf{A}$ are mutually distant, all the elements of $\mathbf{B}$ are mutually distant, and the single element of $\mathbf{C}$ is distant to everything in $\mathbf{A}$ and $\mathbf{B}$; but no element of $\mathbf{A}$ is distant to any element of $\mathbf{B}$. This means that at this point, we are committed to forming one of two maximal cliques (with $8$ elements), {\it viz.} by appending either $\mathbf{A}\cup\mathbf{C}$ or $\mathbf{B}\cup\mathbf{C}$ to our existing $3$-clique. Now, if we form a $4$-clique by appending the single element of $\mathbf{C}$, the resulting clique can be extended to a $5$-clique in $8$ ways (by appending any element of $\mathbf{A}$ or $\mathbf{B}$). However, if instead we choose an element of $\mathbf{A}$, the resulting $4$-clique will only be extensible to a $5$-clique in $4$ ways (by appending either the element of $\mathbf{C}$ or another element of $\mathbf{A}$), and similarly for $\mathbf{B}$. So there are (at least) two kinds of $4$-clique, with different extension counts.

If we extend by $\mathbf{A}\cup\mathbf{C}$, then we get a maximal clique not generated as powers of a single matrix.

\section{Inextensible submaximal clique} \label{inextensible}

Here is a clique in $\mathrm{GL}_2\left(\mathbb{F}_5\right)$ that is inextensible but has only $20$ elements.

\[\begin{array}{cccc}
\left(\begin{array}{cc}0&1\\1&2\end{array}\right)&\left(\begin{array}{cc}0&2\\2&1\end{array}\right)&\left(\begin{array}{cc}0&3\\3&1\end{array}\right)&\left(\begin{array}{cc}0&4\\4&2\end{array}\right)\\\left(\begin{array}{cc}1&0\\0&1\end{array}\right)&\left(\begin{array}{cc}1&1\\1&3\end{array}\right)&\left(\begin{array}{cc}1&2\\2&0\end{array}\right)&\left(\begin{array}{cc}1&3\\3&0\end{array}\right)\\\left(\begin{array}{cc}1&4\\4&3\end{array}\right)&\left(\begin{array}{cc}2&0\\0&2\end{array}\right)&\left(\begin{array}{cc}2&1\\1&0\end{array}\right)&\left(\begin{array}{cc}2&2\\2&3\end{array}\right)\\\left(\begin{array}{cc}2&3\\3&3\end{array}\right)&\left(\begin{array}{cc}2&4\\4&0\end{array}\right)&\left(\begin{array}{cc}3&0\\0&3\end{array}\right)&\left(\begin{array}{cc}3&1\\1&1\end{array}\right)\\\left(\begin{array}{cc}3&2\\2&2\end{array}\right)&\left(\begin{array}{cc}3&3\\3&2\end{array}\right)&\left(\begin{array}{cc}3&4\\4&1\end{array}\right)&\left(\begin{array}{cc}4&0\\0&4\end{array}\right)
\end{array}\]

\section{The limit $q\rightarrow1$}

Theorems and formulas that hold generally over finite fields $\mathbb{F}_q$ also often have a true combinatorial interpretation in the case $q=1$. This is the case with our counting formulas.

We can define (e.g. Section 5.1 of \cite{F1}) the general linear group $\mathrm{GL}_n\left(\mathbb{F}_1\right)$ over the fictitious one-element field to consist of $n\times n$ permutation matrices---those with a single entry of $1$ in each row and each column, all other entries being $0$. These matrices act by permuting rows (from the left) or columns (from the right) and  $\mathrm{GL}_n\left(\mathbb{F}_1\right)$ is isomorphic to the symmetric group $S_n$. We can go on to define $\mathrm{GL}_n\left(\mathbb{F}_{1^n}\right)$ over ``extensions of the one-element field", which are the same as elements of $\mathrm{GL}_n\left(\mathbb{F}_1\right)$ except that instead of entries being $1$, they may be any $n$th root of unity; this group is isomorphic to the wreath product of $S_n$ and the cyclic group $Z_n$.

A point in $\mathbb{P}\left(\mathrm{M}_m\left(\mathbb{F}_1\right)\right)$ consists of the first $m$ rows of a $2m\times2m$ permutation matrix, modulo multiplication from the left by $m\times m$ permutation matrices. Since the latter permute rows arbitrarily, we only care which columns contain a $1$, not which row they appear in, so the number of points is $\left(\begin{array}{c}2m\\m\end{array}\right)$, i.e. the $q\rightarrow1$ limit of $\left\lbrack\begin{array}{c}2m\\m\end{array}\right\rbrack_q$. Given a choice for the filled columns of the first $m$ rows, there is only one choice for the remaining $m$ rows, {\it viz.} the remaining unfilled places, so only one point is distant to a given point. This is the $q\rightarrow1$ limit of $q^{m^2}$, as we would hope. There are then of course no extensions to cliques of $3$ or $4$ points, and this is again given by the $q\rightarrow1$ limits of the clique extension polynomials, which equal $0$. Moving to extensions of $\mathbb{F}_1$ results in no change: multiplication from the left by elements of $\mathrm{GL}_m\left(\mathbb{F}_{1^n}\right)$ allows us not only to arbitrarily permute rows but also to reduce all non-zero entries to $1$.

Hence the distant graph of $\mathrm{M}_m\left(\mathbb{F}_{1^n}\right)$ consists of $\left(\begin{array}{c}2m\\m\end{array}\right)$ points, arranged into mutually distant pairs.

\end{appendices}
\end{document}